\theoremstyle{plain}
\newtheorem{lemma}{{\sc Lemma}}[section]
\newtheorem{proposition}[lemma]{{\sc Proposition}}
\newtheorem{theorem}[lemma]{{\sc Theorem}}
\theoremstyle{definition}
\newtheorem{remark}[lemma]{{\sc Remark}}
\numberwithin{equation}{section}
\def\Gg{{\mathfrak{g}}}
\def\Gh{{\mathfrak{h}}}
\def\BA{{\mathbb{A}}}
\def\BF{{\mathbb{F}}}
\def\BQ{{\mathbb{Q}}}
\def\BZ{{\mathbb{Z}}}
\def\CE{{\mathcal E}}
\def\CO{{\mathcal O}}
\def\ch{\mathop{\rm ch}\nolimits}
\def\Ht{{\mathop{\rm ht}\nolimits}}
\def\id{\mathop{\rm id}\nolimits}
\def\int{{\mathop{\rm int}\nolimits}}
\def\rank{{\mathop{\rm rank}\nolimits}}
\def\sgn{{\mathop{\rm sgn}\nolimits}}
\def\tA{{\tilde{\BA}}}
\def\oU{{\overline{U}}}
\begin{document}
\title[Characters of integrable highest weight modules]
{Characters of integrable highest weight modules over a quantum group}
\author{Toshiyuki TANISAKI}
\thanks
{
The author was partially supported by Grants-in-Aid for Scientific Research (C) 15K04790
from Japan Society for the Promotion of Science.
}
\address{
Department of Mathematics, Osaka City University, 3-3-138, Sugimoto, Sumiyoshi-ku, Osaka, 558-8585 Japan}
\email{tanisaki@sci.osaka-cu.ac.jp}
\subjclass[2010]{20G05, 17B37}
\date{}
\begin{abstract}
We show that the Weyl-Kac type character formula holds
for the integrable highest weight modules over 
the quantized enveloping algebra of any symmetrizable Kac-Moody Lie algebra, when 
the parameter $q$ is not a root of unity.
\end{abstract}
\maketitle

\section{Introduction}

It is well-known that the character of an integrable highest weight module over a symmetrizable Kac-Moody algebra $\Gg$ is given by the Weyl-Kac character formula (see Kac \cite{K}).
In this paper we  consider the corresponding problem  for a quantized enveloping algebra (see Kashiwara \cite{Kas}).

For a field $K$ and $z\in K^\times$ which is not a root of 1, we denote by
$U_{K,z}(\Gg)$ the quantized enveloping algebra of $\Gg$ over $K$ at $q=z$, namely the specialization of Lusztig's $\BZ[q,q^{-1}]$-form via $q\mapsto z$.
It is already known that the Weyl-Kac type character formula holds for $U_{K,z}(\Gg)$ in some cases.
When $K$ is of characteristic $0$ and $z$ is transcendental, this is due to Lusztig \cite{L0}.
When $\Gg$ is finite-dimensional,  this is shown in Andersen, Polo and Wen \cite{APW}.
When $\Gg$ is affine, 
this is known in certain specific cases  (see Chari and Jing \cite{CG}, Tsuchioka \cite{Tsu}).

We first point out that
the problem is closely related to the non-degeneracy of the Drinfeld pairing for $U_{K,z}(\Gg)$.
In fact, assume we could show that the Drinfeld pairing for $U_{K,z}(\Gg)$ is non-degenerate.
Then we can define the quantum Casimir operator.
It allows us to apply Kac's argument for Lie algebras in \cite{K} to $U_{K,z}(\Gg)$, and we obtain the Weyl-Kac type character formula for integrable highest weight modules over $U_{K,z}(\Gg)$.
In particular, we can deduce the Weyl-Kac type character formula in the affine case from 
the  case-by-case calculation of the Drinfeld pairing  due to  Damiani \cite{Da1}, \cite{Da2}.

The aim of this paper is to give a simple unified proof of the non-degeneracy of the Drinfeld pairing and the Weyl-Kac type character formula 
for  $U_{K,z}(\Gg)$, where $\Gg$ is a symmetrizable Kac-Moody algebra, $K$ is a field not necessarily of characteristic zero, and $z\in K^\times$ is not a root of 1.
Our argument is as follows.
We consider the (possibly) modified algebra $\oU_{K,z}(\Gg)$, which is the quotient of $U_{K,z}(\Gg)$ by the ideal generated by the radical of the Drinfeld pairing.
Then the Drinfeld pairing for $U_{K,z}(\Gg)$ induces a non-degenerate pairing for $\oU_{K,z}(\Gg)$, by which we can define the quantum Casimir operator for $\oU_{K,z}(\Gg)$.
It allows us to apply  Kac's argument for Lie algebras to $\oU_{K,z}(\Gg)$, and we obtain the Weyl-Kac type character  formula for $\oU_{K,z}(\Gg)$ with modified denominator.
In the special case where the highest weight is zero, this gives a formula for the modified denominator.
Comparing this with the ordinary denominator formula for Lie algebras, we conclude that the  modified denominator coincides with the original denominator for the Lie algebra $\Gg$.
It implies that  the Drinfeld pairing for $U_{K,z}(\Gg)$ was already non-degenerate.
This is the outline of our argument.
In applying Kac's argument to the modified algebra, we need to show that the modified denominator is skew invariant with respect to a twisted action of the Weyl group.
This is accomplished using certain standard properties of the Drinfeld pairing.

The first draft of this paper contained only  results when $K$ is of characteristic zero.
Then Masaki Kashiwara pointed out to me that the arguments work for positive characteristic case as well.
I would like to thank Masaki Kashiwara for this crucial remark.

\section{quantized enveloping algebras}
Let $\Gh$ be a finite-dimensional vector space over $\BQ$, and 
let
$\{h_i\}_{i\in I}$ and $\{\alpha_i\}_{i\in I}$   be
linearly independent subsets of $\Gh$ and $\Gh^*$, respectively such that 
$(\langle \alpha_j,h_i\rangle)_{i, j\in I}$ is a symmetrizable generalized Cartan matrix.
We denote by $W$ the associated Weyl group.
It is a subgroup of $GL(\Gh)$ generated by the involutions 
$s_i$ ($i\in I$) defined  by $s_i(h)=h-\langle\alpha_i,h\rangle h_i$ for $h\in \Gh$.
The contragredient action of $W$ on $\Gh^*$ is given by  $s_i(\lambda)=\lambda-\langle\lambda,h_i\rangle \alpha_i$ for $i\in I$, $\lambda\in \Gh^*$.
Set 
\[
E=\sum_{i\in I}\BQ\alpha_i,
\qquad
Q=\sum_{i\in I}\BZ\alpha_{i},\qquad
Q^+=\sum_{i\in I}\BZ_{\geqq0}\alpha_i.
\]
We can take a symmetric $W$-invariant bilinear form 
$
(\;,\;):
E\times E\to\BQ
$
such that
\begin{equation}
\frac{(\alpha_i,\alpha_i)}2\in\BZ_{>0}\qquad(i\in I).
\end{equation}
For $\lambda\in E$ and $i\in I$ we obtain from $(\lambda,\alpha_i)=(s_i\lambda,s_i\alpha_i)$ that
\begin{equation}
\label{eq:root}
\langle \lambda,h_i\rangle=\frac{2(\lambda,\alpha_i)}{(\alpha_i,\alpha_i)}.
\end{equation}
In particular we have
\[
(\alpha_i,\alpha_j)=\langle\alpha_j,h_i\rangle\frac{(\alpha_i,\alpha_i)}2\in\BZ,
\]
and hence $(Q,Q)\subset\BZ$.
For $i\in I$ set $t_i=\frac{(\alpha_i,\alpha_i)}2h_i$, and for  $\gamma=\sum_in_i\alpha_i\in Q$ set $t_\gamma=\sum_in_it_i$.
By \eqref{eq:root} we have
$(\lambda,\gamma)=\langle\lambda,t_\gamma\rangle$ for $\lambda\in E$, $\gamma\in Q$.
We fix a $\BZ$-form $\Gh_\BZ$ of $\Gh$
such that
\begin{equation}
\langle\alpha_i,\Gh_\BZ\rangle\subset\BZ,
\qquad
t_i\in\Gh_\BZ\qquad(i\in I).
\end{equation}
We set
\[
P=\{\lambda\in\Gh^*\mid\langle\lambda,\Gh_\BZ\rangle\subset\BZ\},
\qquad
P^+=\{\lambda\in P\mid\langle\lambda,h_i\rangle\in\BZ_{\geqq0}\}.
\]
We fix $\rho\in\Gh^*$ such that $\langle\rho, h_i\rangle=1$ for any $i\in I$, and define a twisted action of $W$ on $\Gh^*$ by
\[
w\circ\lambda=w(\lambda+\rho)-\rho
\qquad(w\in W, \lambda\in \Gh^*).
\]
This action does not depend on the choice of $\rho$, and we have $w\circ P=P$ for any $w\in W$.

Denote by $\CE$ the set of  formal sums $\sum_{\lambda\in P}c_\lambda e(\lambda)$\; ($c_\lambda\in \BZ$) such that 
there exist finitely many $\lambda_1,\dots, \lambda_r\in P$ such that
\[
\{\lambda\in P\mid c_\lambda\ne0\}
\subset \bigcup_{k=1}^r(\lambda_k-Q^+).
\]
Note that $\CE$ is naturally a commutative ring by the multiplication $e(\lambda)e(\mu)=e(\lambda+\mu)$.

Denote by $\Delta^+$ the set of positive roots for the Kac-Moody Lie algebra $\Gg$ associated to the generalized Cartan matrix $(\langle\alpha_j, h_i\rangle)_{i,j\in I}$.
For $\alpha\in\Delta^+$ let $m_\alpha$ be the dimension of the root space of $\Gg$ with weight $\alpha$.
We define an invertible element $D$ of $\CE$ by
\[
D=
\prod_{\alpha\in\Delta^+}(1-e(-\alpha))^{m_\alpha}.
\]

For $n\in\BZ_{\geqq0}$ set 
\[
[n]_x=\frac{x^n-x^{-n}}{x-x^{-1}}\in\BZ[x,x^{-1}],
\quad
[n]!_x=[n]_x[n-1]_x\cdots[1]_x\in\BZ[x,x^{-1}].
\]
We denote by $\BF=\BQ(q)$ the field of rational functions in the variable $q$ with coefficients in $\BQ$.

The quantized enveloping algebra $U$ associated to $\Gh$, $\{h_i\}_{i\in I}$, $\{\alpha_i\}_{i\in I}$, $\Gh_\BZ$, $(\;,\;)$ is the associative algebra over $\BF$ generated by the elements $k_h$, $e_i$, $f_i$ ($h\in\Gh_\BZ$, $i\in I$) satisfying the relations
\begin{align}
&
k_0=1,\qquad
k_hk_{h'}=k_{h+h'}
&(h, h'\in\Gh_\BZ),
\\
&
k_he_ik_{-h}=q_i^{\langle\alpha_i,h\rangle}e_i
&(h\in\Gh_\BZ, i\in I),
\\
&
k_hf_ik_{-h}=q_i^{-\langle\alpha_i,h\rangle}f_i
&(h\in\Gh_\BZ, i\in I),
\\
&
e_if_j-f_je_i=
\delta_{ij}
\frac{k_i-k_i^{-1}}{q_i-q_i^{-1}}
&(i, j\in I),
\\
&
\sum_{r+s=1-\langle\alpha_j,h_i\rangle}
(-1)^re_i^{(r)}e_je_i^{(s)}=0
&(i, j\in I,\;i\ne j),
\\
&
\sum_{r+s=1-\langle\alpha_j,h_i\rangle}
(-1)^rf_i^{(r)}f_jf_i^{(s)}=0
&(i, j\in I,\;i\ne j),
\end{align}
where $k_i=k_{t_i}$, $q_i=q^{(\alpha_i,\alpha_i)/2}$ for $i\in I$, and $e_i^{(r)}=\frac1{[r]!_{q_i}}e_i^r$, $f_i^{(r)}=\frac1{[r]!_{q_i}}f_i^r$ for $i\in I$, $r\in\BZ_{\geqq0}$.
For $\gamma\in Q$ we set $k_\gamma=k_{t_\gamma}$.

We have a Hopf algebra structure of  $U$ given by 
\begin{align}
&
\Delta(k_h)=k_h\otimes k_h,
\\
\nonumber
&
\Delta(e_i)=e_i\otimes1+k_i\otimes e_i,\quad
\Delta(f_i)=f_i\otimes k_i^{-1}+1\otimes f_i
\\
&\varepsilon(k_h)=1,\quad
\varepsilon(e_i)=\varepsilon(f_i)=0,
\\
&
S(k_h)=k_h^{-1},\quad
S(e_i)=-k_i^{-1}e_i,\quad
S(f_i)=-f_ik_i
\end{align}
for $h\in\Gh_\BZ, i\in I$.
We will sometimes use Sweedler's notation for the coproduct;
\[
\Delta(u)=\sum_{(u)}u_{(0)}\otimes u_{(1)}
\qquad(u\in U),
\]
and the iterated coproduct;
\[
\Delta_m(u)=\sum_{(u)_m}u_{(0)}\otimes\cdots\otimes u_{(m)}
\qquad(u\in U).
\]

We define $\BF$-subalgebras $U^0$, $U^+$, $U^-$, $U^{\geqq0}$, $U^{\leqq0}$ of $U$ by
\begin{gather*}
U^0=\langle k_h\mid h\in\Gh_\BZ\rangle,\quad
U^+=\langle e_i\mid i\in I\rangle,\quad
U^-=\langle f_i\mid i\in I\rangle,
\\
U^{\geqq0}=
\langle k_h, e_i\mid h\in\Gh_\BZ, i\in I\rangle,\quad
U^{\leqq0}=
\langle k_h, f_i\mid h\in\Gh_\BZ, i\in I\rangle.
\end{gather*}
For $\gamma\in Q$ set 
\[
U_\gamma=
\{u\in U\mid k_huk_h^{-1}=q^{\langle\gamma,h\rangle}u\;(h\in\Gh_\BZ)\},
\qquad
U^\pm_\gamma=U_\gamma\cap U^\pm.
\]
Then we have
\[
U^0=\bigoplus_{h\in\Gh_\BZ}\BF k_h,
\qquad
U^\pm=\bigoplus_{\gamma\in Q^+}U^{\pm}_{\pm\gamma}.
\]
It is known that the multiplication of $U$ induces isomorphisms
\begin{gather*}
U\cong U^+\otimes U^0\otimes U^-
\cong U^-\otimes U^0\otimes U^+,
\\
U^{\geqq0}\cong U^+\otimes U^0\cong U^0\otimes U^+,
\quad
U^{\leqq0}\cong U^-\otimes U^0\cong U^0\otimes U^-
\end{gather*}
of vector spaces.
It is also known that
\begin{equation}
\label{eq:dim1}
\sum_{\gamma\in Q^+}\dim U^-_{-\gamma}e(-\gamma)
=D^{-1}.
\end{equation}

For a $U$-module $V$ and $\lambda\in P$
we set
\[
V_\lambda=
\{v\in V\mid k_hv=q^{\langle\lambda,h\rangle}v\;(h\in\Gh_\BZ)\}.
\]
We say that a $U$-module $V$ is integrable if $V=\bigoplus_{\lambda\in P}V_\lambda$ and for any $v\in V$ and $i\in I$ there exists some $N>0$ such that $e_i^{(n)}v=f_i^{(n)}v=0$ for $n\geqq N$.

For $i\in I$ and an integrable $U$-module $V$ define an operator $T_i:V\to V$ by
\[
T_iv=
\sum_{-a+b-c=\langle\lambda,h_i\rangle}
(-1)^bq_i^{-ac+b}e_i^{(a)}f_i^{(b)}e_i^{(c)}v\qquad(v\in V_\lambda).
\]
It is invertible, and satisfies $T_iV_\lambda= V_{s_i\lambda}$ for $\lambda\in P$.
There exists a unique algebra automorphism $T_i:U\to U$ such that 
for any integrable $U$-module $V$ we have
$
T_iuv=T_i(u)T_iv$\;($u\in U, v\in V$).
Then we have $T_i(U_\gamma)=U_{s_i\gamma}$ for $\gamma\in Q$.
The action of $T_i$ on $U$ is given by
\begin{gather*}
T_i(k_h)=k_{s_ih},
\quad
T_i(e_i)=-f_ik_i,
\quad
T_i(f_i)=-k_i^{-1}e_i\qquad(h\in\Gh_\BZ),
\\
T_i(e_j)=
\sum_{r+s=-\langle \alpha_j,h_i\rangle}
(-1)^rq_i^{-r}e_i^{(s)}e_je_i^{(r)}\qquad(j\in I, i\ne j),
\\
T_i(f_j)=
\sum_{r+s=-\langle \alpha_j,h_i\rangle}
(-1)^rq_i^{r}f_i^{(r)}f_jf_i^{(s)}\qquad(j\in I, i\ne j)
\end{gather*}
(see \cite[Section 37.1]{Lbook}).

The multiplication of $U$ induces
\begin{align}
\label{eq:td1}
&
U^+\cong (U^+\cap T_i(U^+))\otimes\BF[e_i]
\cong \BF[e_i]\otimes(U^+\cap T_i^{-1}(U^+)),
\\
\label{eq:td2}
&
U^-\cong (U^-\cap T_i(U^-))\otimes\BF[f_i]
\cong
\BF[f_i]\otimes (U^-\cap T_i^{-1}(U^-))
\end{align}
(see \cite[Lemma 38.1.2]{Lbook}).
Moreover, 
\begin{align}
\label{eq:cp1}
&
\Delta(U^+\cap T_i(U^+))\subset U^{\geqq0}\otimes (U^+\cap T_i(U^+)),
\\
\label{eq:cp2}
&
\Delta(U^+\cap T_i^{-1}(U^+))\subset U^0(U^+\cap T_i^{-1}(U^+))
\otimes U^{+},
\\
\label{eq:cp3}
&
\Delta(U^-\cap T_i(U^-))\subset (U^-\cap T_i(U^-))\otimes U^{\leqq0},
\\
\label{eq:cp4}
&
\Delta(U^-\cap T_i^{-1}(U^-))\subset U^{-}\otimes U^0(U^-\cap T_i^{-1}(U^-))
\end{align}
(see \cite[Lemma 2.8]{T1}).

Set
\[
{}^\sharp U^{0}=\bigoplus_{\gamma\in Q}\BF k_\gamma\subset U^0,
\qquad
{}^\sharp U^{\geqq0}={}^\sharp U^{0}U^+,\qquad
{}^\sharp U^{\leqq0}={}^\sharp U^{0}U^-.
\]
They are Hopf subalgebras of $U$.
The Drinfeld pairing is the bilinear form
\[
\tau:{}^\sharp U^{\geqq0}\times {}^\sharp U^{\leqq0}\to\BF
\]
characterized by the following properties:
\begin{align}
\label{eq:D1}
&
\tau(x,y_1y_2)=(\tau\otimes\tau)(\Delta(x),y_1\otimes y_2)
\;\;(x\in {}^\sharp U^{\geqq0}, y_1, y_2\in {}^\sharp U^{\leqq0}),
\\
\label{eq:D2}
&
\tau(x_1x_2,y)=(\tau\otimes\tau)(x_2\otimes x_1,\Delta(y))
\;\;
(x_1, x_2\in {}^\sharp U^{\geqq0}, y\in {}^\sharp U^{\leqq0}),
\\
\label{eq:D3}
&
\tau(k_\gamma,k_\delta)=q^{-(\gamma,\delta)}
\hspace{6cm}(\gamma, \delta\in Q),
\\
\label{eq:D4}
&
\tau(e_i,f_j)=-\delta_{ij}(q_i-q_i^{-1})^{-1}
\hspace{4.5cm}(i, j\in I),
\\
\label{eq:D5}
&
\tau(e_i,k_\gamma)=\tau(k_\gamma,f_i)=0
\hspace{4.3cm}(i\in I, \gamma\in Q).
\end{align}
It satisfies the following properties:
\begin{align}
\label{eq:D6}
&
\tau(xk_\gamma,yk_\delta)=\tau(x,y)q^{-(\gamma,\delta)}
&(x\in U^+, y\in U^-,\gamma, \delta\in Q),
\\
\label{eq:D7}
&
\tau(U^+_\gamma, U^-_{-\delta})=\{0\}
&
(\gamma, \delta\in Q^+, \gamma\ne\delta),
\\
\label{eq:D8}
&
\tau|_{U^+_\gamma\times U^-_{-\gamma}}\;\text{is non-degenerate}
&(\gamma\in Q^+),
\\
\label{eq:D9}
&
\tau(Sx,Sy)=\tau(x,y)
&(x\in {}^\sharp{U}^{\geqq0}, y\in{}^\sharp{U}^{\leqq0}).
\end{align}
Moreover, for $x\in{}^\sharp{U}^{\geqq0}$, $y\in {}^\sharp{U}^{\leqq0}$ we have
\begin{align}
\label{eq:D10}
&xy=\sum_{(x)_2, (y)_2}
\tau(x_{(0)},y_{(0)})\tau(x_{(2)},Sy_{(2)})y_{(1)}x_{(1)},
\\
\label{eq:D11}
&yx=\sum_{(x)_2, (y)_2}
\tau(Sx_{(0)},y_{(0)})\tau(x_{(2)},y_{(2)})x_{(1)}y_{(1)}.
\end{align}
(see \cite[Lemma 2.1.2]{T0}).

For $i\in I$ we define linear maps
\[
r_{i,\pm}:U^\pm\to U^\pm,\qquad
r'_{i,\pm}:U^\pm\to U^\pm
\]
by
\begin{align*}
\Delta(x)\in& r_{i,+}(x)k_i\otimes e_i+\sum_{\delta\in Q^+\setminus\{\alpha_i\}}U^{\geqq0}\otimes U^+_\delta
\qquad(x\in U^+),
\\
\Delta(x)\in& 
e_ik_{\gamma-\alpha_i}\otimes
r'_{i,+}(x)+\sum_{\delta\in Q^+\setminus\{\alpha_i\}}
U^+_\delta U^0\otimes U^{+}
\qquad(x\in U^+_\gamma),
\\
\Delta(y)\in& r_{i,-}(y)\otimes f_ik_{-\gamma+\alpha_i}+\sum_{\delta\in Q^+\setminus\{\alpha_i\}}U^{-}\otimes U^-_{-\delta}U^0
\qquad(y\in U^-_{-\gamma}),
\\
\Delta(y)\in& 
f_i\otimes
r'_{i,-}(y)k_i^{-1}+\sum_{\delta\in Q^+\setminus\{\alpha_i\}}
U^-_{-\delta}\otimes U^{\leqq0}
\qquad(y\in U^{-}).
\end{align*}

We have
\begin{align}
\label{eq:UT1}
U^+\cap T_i(U^+)=&
\{u\in U^+\mid \tau(u,U^-f_i)=\{0\}\}
\\
\nonumber
=&\{u\in U^+\mid r_{i,+}(u)=0\},
\\
\label{eq:UT2}
U^+\cap T_i^{-1}(U^+)=&
\{u\in U^+\mid \tau(u,f_iU^-)=\{0\}\}
\\
\nonumber
=&\{u\in U^+\mid r'_{i,+}(u)=0\},
\\
\label{eq:UT3}
U^-\cap T_i(U^-)=&
\{u\in U^-\mid \tau(U^+e_i,u)=\{0\}\}
\\
\nonumber
=&\{u\in U^-\mid r'_{i,-}(u)=0\},
\\
\label{eq:UT4}
U^-\cap T_i^{-1}(U^-)=&
\{u\in U^-\mid \tau(e_iU^+,u)=\{0\}\}
\\
\nonumber
=&\{u\in U^-\mid r_{i,-}(u)=0\}
\end{align}
(see \cite[Proposition 38.1.6]{Lbook}).

By \eqref{eq:cp1}, \eqref{eq:cp2}, \eqref{eq:cp3}, \eqref{eq:cp4}, \eqref{eq:UT1}, \eqref{eq:UT2}, \eqref{eq:UT3}, \eqref{eq:UT4}
we easily obtain
\begin{align}
\label{eq:D12}
\tau(xe_i^m,yf_i^n)
=&\delta_{mn}
\tau(x,y)
\frac{q_i^{n(n-1)/2}}{(q_i^{-1}-q_i)^n}[n]!_{q_i}
\\
\nonumber
&(x\in U^+\cap T_i(U^+), y\in U^-\cap T_i(U^-)),
\\
\label{eq:D13}
\tau(e_i^mx',f_i^ny')
=&\delta_{mn}
\tau(x',y')
\frac{q_i^{n(n-1)/2}}{(q_i^{-1}-q_i)^n}[n]!_{q_i}
\\
\nonumber
&(x'\in U^+\cap T_i^{-1}(U^+), y'\in U^-\cap T_i^{-1}(U^-)).
\end{align}

We have also 
\begin{align}
\label{eq:D14}
\tau(x,y)=&\tau(T_i^{-1}(x),T_i^{-1}(y))
\\
\nonumber
&
\quad(x\in U^+\cap T_i(U^+), y\in U^-\cap T_i(U^-))
\end{align}
(see \cite[Proposition 38.2.1]{Lbook}, \cite[Theorem 5.1]{T1}).

\section{specialization}
Let $R$ be a subring of $\BF=\BQ(q)$ containing $\BA=\BZ[q,q^{-1}]$.
We denote by $U_R$ the $R$-subalgebra of $U$ generated by $k_h$, $e_i^{(n)}$, $f_i^{(n)}$\;($h\in\Gh_\BZ, i\in I, n\geqq0$).
It is a Hopf algebra over $R$.

We define subalgebras $U_R^0$, $U_R^+$, $U_R^-$, $U_R^{\geqq0}$, $U_R^{\leqq0}$ of $U_R$ by
\begin{gather*}
U_R^0=U^0\cap U_R,\quad
U^{\pm}_R=U^{\pm}\cap U_R,\quad
\\
U_R^{\geqq0}=U^{\geqq0}\cap U_R,\quad
U_R^{\leqq0}=U^{\leqq0}\cap U_R.
\end{gather*}
Setting
$
U_{R,\pm\gamma}^\pm=U_{\pm\gamma}^\pm\cap U_R$ for $\gamma\in Q^+
$
we have
\[
U_{R}^\pm
=\bigoplus_{\gamma\in Q^+}
U_{R,\pm\gamma}^\pm.
\]
It is known that $U^\pm_{R,\pm\gamma}$ is a free $R$-module of rank $\dim U^\pm_{\pm\gamma}$ (see \cite[Section 14.2]{Lbook}).
Hence we have 
\begin{equation}
\label{eq:dim2}
\sum_{\gamma\in Q^+}\rank_R(U^-_{R,-\gamma})e(-\gamma)
=D^{-1}
\end{equation}
by \eqref{eq:dim1}.

The multiplication of $U_R$ induces isomorphisms
\begin{gather*}
U_R\cong U_R^+\otimes U_R^0\otimes U_R^-
\cong U_R^-\otimes U_R^0\otimes U_R^+,
\\
U_R^{\geqq0}\cong U_R^+\otimes U_R^0\cong U_R^0\otimes U_R^+,
\quad
U_R^{\leqq0}\cong U_R^-\otimes U_R^0\cong U_R^0\otimes U_R^-
\end{gather*}
of  $R$-modules.

For $i\in I$ the algebra automorphisms $T_i^{\pm1}:U\to U$ 
preserve $U_R$.
\begin{lemma}
The multiplication of $U_R$ induces isomorphisms
\label{lem:tdR}
\begin{align}
\label{eq:tdR1}
&
U_R^+\cong (U_R^+\cap T_i(U_R^+))\otimes_R
\left(\bigoplus_{n=0}^\infty Re_i^{(n)}\right),
\\
\label{eq:tdR2}
&
U_R^+\cong
\left(\bigoplus_{n=0}^\infty Re_i^{(n)}\right)
\otimes_R
 (U_R^+\cap T_i^{-1}(U_R^+)),
\\
&\label{eq:tdR3}
U_R^-\cong (U_R^-\cap T_i(U_R^-))\otimes_R
\left(\bigoplus_{n=0}^\infty Rf_i^{(n)}\right),
\\
&\label{eq:tdR4}
U_R^-\cong 
\left(\bigoplus_{n=0}^\infty Rf_i^{(n)}\right)
\otimes_R(U_R^-\cap T_i^{-1}(U_R^-)).
\end{align}
\end{lemma}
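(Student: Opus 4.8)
The four isomorphisms are proved by the same mechanism, so I would treat \eqref{eq:tdR1} in detail and obtain the others by replacing $r_{i,+}$ with $r'_{i,+}$, $r'_{i,-}$, $r_{i,-}$ and $e_i$ with $f_i$ as dictated by \eqref{eq:UT1}--\eqref{eq:UT4}. Put $B_i=U_R^+\cap T_i(U_R^+)$. Because $T_i^{\pm1}$ preserve $U_R$, one has $B_i=(U^+\cap T_i(U^+))\cap U_R^+$, so $B_i$ is contained in the generic factor appearing in \eqref{eq:td1}, and the map under study is multiplication $m\colon B_i\otimes_R\big(\bigoplus_{n\geqq0}Re_i^{(n)}\big)\to U_R^+$. \emph{Injectivity} of $m$ is immediate from \eqref{eq:td1}: an element of the kernel is a relation $\sum_n b_ne_i^{(n)}=0$ in $U^+$ with $b_n\in U^+\cap T_i(U^+)$, and since $\{e_i^{(n)}\}_n$ is an $\BF$-basis of $\BF[e_i]$ and \eqref{eq:td1} is an isomorphism, every $b_n$ vanishes.

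The real content is surjectivity, and the plan is to use $r_{i,+}$ as an ``extraction operator''. Two facts drive the argument. First, $r_{i,+}$ preserves the integral form: since $\Delta(e_i^{(n)})$ lies in $U_R^{\geqq0}\otimes_RU_R^+$ and $\Delta$ is multiplicative, $\Delta$ restricts to $U_R^+\to U_R^{\geqq0}\otimes_RU_R^+$; the weight grading $U_R^+=\bigoplus_\delta U_{R,\delta}^+$ splits this into $R$-direct summands, and the summand with second factor of weight $\alpha_i$ is $U_R^{\geqq0}\otimes_RRe_i$ because $U_{R,\alpha_i}^+=Re_i$, so extracting this component and cancelling the invertible $k_i$ produces $r_{i,+}(x)\in U_R^+$ for $x\in U_R^+$. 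Second, $r_{i,+}$ obeys a twisted Leibniz rule (a formal consequence of $\Delta(xy)=\Delta(x)\Delta(y)$, see \cite{Lbook}), satisfies $r_{i,+}(b)=0$ for $b\in U^+\cap T_i(U^+)$ by \eqref{eq:UT1}, and sends $e_i^{(n)}$ to a \emph{unit} of $\BA$ times $e_i^{(n-1)}$.

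Granting these, take $x\in U_{R}^+$ and write $x=\sum_{n=0}^{N}b_ne_i^{(n)}$ with $b_n\in U^+\cap T_i(U^+)$ by \eqref{eq:td1}. The Leibniz rule together with $r_{i,+}(b_n)=0$ gives $r_{i,+}(b_ne_i^{(n)})=(\text{unit})\,b_ne_i^{(n-1)}$, whence $r_{i,+}^{\,N}(x)=(\text{unit})\,b_N$. By the first fact $r_{i,+}^{\,N}(x)\in U_R^+$, so $b_N\in U_R^+\cap(U^+\cap T_i(U^+))=B_i$; then $x-b_Ne_i^{(N)}\in U_R^+$ has strictly smaller degree in $e_i$, and descending induction on $N$ shows that all $b_n\in B_i$, i.e.\ $x$ lies in the image of $m$. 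Hence $m$ is an isomorphism.

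The step I expect to be the crux is the \emph{integrality} asserted in the second fact: the whole point of the divided powers $e_i^{(n)}$ in the statement is that only with them is $r_{i,+}(e_i^{(n)})$ a unit multiple of $e_i^{(n-1)}$, so that the extraction can be inverted over $R$; with ordinary powers $e_i^{n}$ one would encounter the factor $[n]!_{q_i}$, which is not a unit, and the division would fail. Thus pinning down that scalar as a power of $q_i$, and checking that $r_{i,+}$ genuinely maps $U_R^+$ into itself, are the two points needing care — everything else is formal. Note that rank-counting via \eqref{eq:dim2} and \eqref{eq:td1} already matches the ranks of source and target of $m$ in each weight, but this alone cannot yield surjectivity over a general ring $R$, which is exactly why the extraction argument is required.
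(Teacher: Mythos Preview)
Your proof is correct, and it takes a genuinely different route from the paper's.

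Both arguments dispatch injectivity the same way (immediate from the generic decomposition \eqref{eq:td1}). For surjectivity, however, the paper does \emph{not} extract coefficients with $r_{i,+}$. Instead it shows that the image of $m$ is a left ideal: for $j\ne i$ one has $e_j^{(n)}\in B_i$, while for $j=i$ one establishes a commutation formula. Starting from $x\in U^+_{R,\gamma}\cap T_i(U^+_R)$, the paper defines $x_0=x$ and $x_{k+1}=[k+1]_{q_i}^{-1}(e_ix_k-q_i^{\langle\gamma,\alpha_i^\vee\rangle+2k}x_ke_i)$, checks via \eqref{eq:UT1} that each $x_k$ lies in $U^+\cap T_i(U^+)$, and then proves by induction the identity
\[
e_i^{(n)}x=\sum_{k=0}^n q_i^{(n-k)(\langle\gamma,\alpha_i^\vee\rangle+k)}x_k e_i^{(n-k)}.
\]
Reading this as $x_n=e_i^{(n)}x-\sum_{k<n}(\cdots)x_ke_i^{(n-k)}$ shows inductively that every $x_k$ lies in $U_R^+$, hence in $B_i$; thus $e_i^{(n)}B_i\subset\sum_k B_i\,e_i^{(k)}$ and the image of $m$ is all of $U_R^+$.

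Your approach trades this commutation calculus for a coalgebraic one: you use that $\Delta$ respects the integral form and the weight grading to see that $r_{i,+}$ preserves $U_R^+$, and then exploit $r_{i,+}(e_i^{(n)})=q_i^{\,n-1}e_i^{(n-1)}$ (a unit multiple) to peel off the top coefficient $b_N$ of an arbitrary $x\in U_R^+$. The paper's route avoids verifying that $r_{i,+}$ maps $U_R^+$ to itself; your route avoids the inductive commutation identity and makes explicit why divided powers are essential (the extraction constant is a unit precisely because $[n]_{q_i}$ has been absorbed into $e_i^{(n)}$). Both are short, and your emphasis on the integrality of $r_{i,+}$ is a useful observation in its own right.
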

\begin{proof}
We only show \eqref{eq:tdR1}.
The injectivity of the canonical homomorphism
\[
 (U_R^+\cap T_i(U_R^+))\otimes_R
\left(\bigoplus_{n=0}^\infty Re_i^{(n)}\right)
\to
U_R^+
\]
is clear.
To show the surjectivity it is sufficient to verify that its image is stable under the left multiplication by $e_j^{(n)}$ for any $j\in I$ and $n\geqq0$.
If $j\ne i$, this is clear since $e_j^{(n)}\in  U_R^+\cap T_i(U_R^+)$.
Consider the case $j=i$.
By \eqref{eq:UT1} and the general formula
\[
r_{i,+}(xx')=q_i^{\langle\gamma',\alpha_i^\vee\rangle}r_{i,+}(x)x'+xr_{i,+}(x')
\qquad(x\in U^+, x'\in U^+_{\gamma'})
\]
we easily obtain 
\[
x\in U^+_\gamma\cap T_i(U^+)
\;
\Longrightarrow\;
e_ix-q_i^{\langle\gamma,\alpha_i^\vee\rangle}xe_i\in U_{\gamma+\alpha_i}^+\cap T_i(U^+).
\]
Now let $x\in U^+_{R,\gamma}\cap T_i(U^+_{R})$.
Define $x_k\in U^+_{\gamma+k\alpha_i}\cap T_i(U^+)$ inductively 
by
$x_0=x$, $x_{k+1}=\frac1{[k+1]_{q_i}}(e_ix_k-q_i^{\langle\gamma,\alpha_i^\vee\rangle+2k}x_ke_i)$.
Then we see by induction on $n$ that
\begin{equation}
\label{eq:p1}
e_i^{(n)}x=\sum_{k=0}^n
q_i^{(n-k)(\langle\gamma,\alpha_i^\vee\rangle+k)}
x_ke_i^{(n-k)},
\end{equation}
or equivalently,
\begin{equation}
\label{eq:p2}
x_n=
e_i^{(n)}x-\sum_{k=0}^{n-1}
q_i^{(n-k)(\langle\gamma,\alpha_i^\vee\rangle+k)}
x_ke_i^{(n-k)}.
\end{equation}
We obtain from \eqref{eq:p2} that $x_n\in U^+_R$ by induction on $n$.
By $T_i(U_R)=U_R$ we have $x_n\in U^+_R\cap T_i(U^+)=U^+_R\cap T_i(U^+_R)$.
It follows that
$e_i^{(n)}
(U^+_R\cap T_i(U^+_R))
\subset
\sum_{k=0}^n(U^+_R\cap T_i(U^+_R))e_i^{(k)}$
by \eqref{eq:p1}.
\end{proof}

We set
\[
{}^\sharp {U}_R^0=\bigoplus_{\gamma\in Q}R k_\gamma\subset U_R^0,
\qquad
{}^\sharp {U}_R^{\geqq0}={}^\sharp {U}_R^0U_R^+,
\qquad
{}^\sharp {U}_R^{\leqq0}={}^\sharp {U}_R^0U_R^-.
\]
Define a subring $\tA$ of $\BF$ by
\begin{align}
\tA=&\BZ[q,q^{-1}, (q-q^{-1})^{-1}, [n]_{q}^{-1}\mid n>0]
\\
\nonumber
=&\BZ[q, q^{-1}, (q^n-1)^{-1}\mid n>0].
\end{align}
Then the Drinfeld pairing induces a bilinear form
\[
\tau_\tA:{}^\sharp {U}_\tA^{\geqq0}\times{}^\sharp {U}_\tA^{\leqq0}\to\tA.
\]
For $\gamma\in Q^+$ we denote its restriction to $U^+_{\tA,\gamma}\times U^-_{\tA,-\gamma}$ by
\[
\tau_{\tA,\gamma}:U^+_{\tA,\gamma}\times U^-_{\tA,-\gamma}\to \tA.
\]

In the rest of this paper we fix a field $K$ and $z\in K^\times$ which is not a root of 1, and consider 
the Hopf algebra
\begin{equation}
U_z=K\otimes_\tA U_\tA,
\end{equation}
where $\tA\to K$ is given by $q\mapsto z$.
We define subalgebras $U_z^0$, $U_z^+$, $U_z^-$, $U_z^{\geqq0}$, $U_z^{\leqq0}$ of $U_z$ by
\begin{gather*}
U_z^0=K\otimes_\tA U_\tA^0,\quad
U_z^\pm=K\otimes_\tA U_\tA^\pm,\quad
\\
U_z^{\geqq0}=K\otimes_\tA U_\tA^{\geqq0},\quad
U_z^{\leqq0}=K\otimes_\tA U_\tA^{\leqq0}.
\end{gather*}
For $\gamma\in Q^+$ we set
$U_{z,\pm\gamma}^{\pm}
=K\otimes_\tA U_{\tA,\pm\gamma}^{\pm}$.
Then 
we have
\[
U^0_z=\bigoplus_{h\in\Gh_\BZ}K k_h,\qquad
U_{z}^\pm
=\bigoplus_{\gamma\in Q^+}
U_{z,\pm\gamma}^\pm.
\]
By \eqref{eq:dim2}
we have
\begin{equation}
\label{eq:dim3}
\sum_{\gamma\in Q^+}\dim U^-_{z,-\gamma}e(-\gamma)
=D^{-1}.
\end{equation}

Moreover, setting
\[
U_{z,\gamma}=
\{u\in U_z\mid k_huk_h^{-1}=z^{\langle\gamma,h\rangle}u
\quad(h\in\Gh_\BZ)\}
\qquad(\gamma\in Q),
\]
we have $U_{z,\pm\gamma}^{\pm}=U_z^\pm\cap U_{z,\gamma}$ since $z$ is not a root of $1$.
The multiplication of $U_z$ induces isomorphisms
\begin{gather}
\label{eq:triang}
U_z\cong U_z^+\otimes U_z^0\otimes U_z^-
\cong U_z^-\otimes U_z^0\otimes U_z^+,
\\
U_z^{\geqq0}\cong U_z^+\otimes U_z^0\cong U_z^0\otimes U_z^+,
\quad
U_z^{\leqq0}\cong U_z^-\otimes U_z^0\cong U_z^0\otimes U_z^-
\end{gather}
of $K$-modules.
Here, $\otimes$ denotes $\otimes_K$.

For a $U_z$-module $V$ and $\lambda\in P$
we set
\[
V_\lambda=
\{v\in V\mid k_hv=z^{\langle\lambda,h\rangle}v\;(h\in\Gh_\BZ)\}.
\]
We say that a $U_z$-module $V$ is integrable if $V=\bigoplus_{\lambda\in P}V_\lambda$ and for any $v\in V$ and $i\in I$ there exists some $N>0$ such that $e_i^{(n)}v=f_i^{(n)}v=0$ for $n\geqq N$.

For $i\in I$ and an integrable $U_z$-module $V$ define an operator $T_i:V\to V$ by
\[
T_iv=
\sum_{-a+b-c=\langle\lambda,h_i\rangle}
(-1)^bz_i^{-ac+b}e_i^{(a)}f_i^{(b)}e_i^{(c)}v\qquad(v\in V_\lambda),
\]
where $z_i=z^{(\alpha_i,\alpha_i)/2}$.
It is invertible, and satisfies $T_iV_\lambda= V_{s_i\lambda}$ for $\lambda\in P$.
We denote by $T_i:U_z\to U_z$ the algebra automorphism of $U_z$ induced from $T_i:U_\tA\to U_\tA$.
Then we have $T_i(U_{z,\gamma})=U_{z,s_i\gamma}$ for $\gamma\in Q$.

\begin{lemma}
The multiplication of $U_z$ induces isomorphisms
\label{lem:tdz}
\begin{align}
\label{eq:tdz1}
&
U_z^+\cong (U_z^+\cap T_i(U_z^+))\otimes
\left(\bigoplus_{n=0}^\infty K e_i^{(n)}\right),
\\
\label{eq:tdz2}
&
U_z^+\cong 
\left(\bigoplus_{n=0}^\infty K e_i^{(n)}\right)
\otimes
(U_z^+\cap T_i^{-1}(U_z^+)),
\\
\label{eq:tdz3}
&
U_z^-\cong (U_z^-\cap T_i(U_z^-))\otimes
\left(\bigoplus_{n=0}^\infty K f_i^{(n)}\right),
\\
\label{eq:tdz4}
&
U_z^-\cong 
\left(\bigoplus_{n=0}^\infty K f_i^{(n)}\right)
\otimes
(U_z^-\cap T_i^{-1}(U_z^-)).
\end{align}
\end{lemma}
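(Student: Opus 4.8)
\emph{Strategy.} The plan is to deduce all four isomorphisms by base change along $\tA\to K$, $q\mapsto z$, from the corresponding ones of Lemma \ref{lem:tdR} with $R=\tA$. Each module occurring there is flat over $\tA$: the factors $\bigoplus_n\tA e_i^{(n)}$, $\bigoplus_n\tA f_i^{(n)}$ are free, and $U_\tA^+\cap T_i^{\pm1}(U_\tA^+)$, $U_\tA^-\cap T_i^{\pm1}(U_\tA^-)$ are, being the $n=0$ summands in \eqref{eq:tdR1}--\eqref{eq:tdR4}, direct summands of the free modules $U_\tA^\pm$ and hence of $U_\tA$. Therefore applying $K\otimes_\tA(-)$ to, say, \eqref{eq:tdR1} preserves the multiplication isomorphism and gives
\[
U^+_z\cong\big(K\otimes_\tA(U^+_\tA\cap T_i(U^+_\tA))\big)\otimes\Big(\bigoplus_{n=0}^\infty Ke_i^{(n)}\Big),
\]
and similarly for the other three. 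Thus the whole content of the lemma is the identification, inside $U_z$, of $K\otimes_\tA(U^+_\tA\cap T_i(U^+_\tA))$ with $U^+_z\cap T_i(U^+_z)$, together with its three analogues. Set $A=U^+_\tA\cap T_i(U^+_\tA)$ and let $B\subseteq U_z$ be the image of the injection $K\otimes_\tA A\to K\otimes_\tA U_\tA=U_z$ (injective since $A$ is a direct summand of $U_\tA$).

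\emph{Easy inclusion.} The inclusion $A\hookrightarrow U_\tA$ factors both through $U^+_\tA$ and through $T_i(U^+_\tA)$, each a direct summand of $U_\tA$. Applying $K\otimes_\tA(-)$ and using that $T_i$ on $U_z$ is induced from $T_i$ on $U_\tA$, so that $K\otimes_\tA T_i(U^+_\tA)=T_i(U^+_z)$, we get $B\subseteq U^+_z$ and $B\subseteq T_i(U^+_z)$, whence $B\subseteq U^+_z\cap T_i(U^+_z)$.

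\emph{Reverse inclusion (the crux).} First, applying $T_i$ to \eqref{eq:tdR2} over $\tA$ and base changing yields $T_i(U^+_z)=\bigoplus_{m\geqq0}g_i^{(m)}B$, where $g_i=T_i(e_i)=-f_ik_i$ and $g_i^{(m)}=T_i(e_i^{(m)})$; since $k_i$ and $f_i$ commute up to a nonzero scalar and $[m]!_{z_i}$ is a unit (as $z$ is not a root of $1$), $g_i^{(m)}$ is a nonzero scalar multiple of $f_i^m k_i^m$. Now take $w\in U^+_z\cap T_i(U^+_z)$ and write $w=\sum_m g_i^{(m)}b_m$ with $b_m\in B\subseteq U^+_z$. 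Then
\[
w=\sum_m c_m\,f_i^m\,k_i^m\,b_m\qquad(c_m\in K^\times).
\]
Because $f_i^m\in U^-_z$, $k_i^m=k_{mt_i}\in U^0_z$ and $b_m\in U^+_z$, this is precisely the expansion of $w$ under the triangular isomorphism $U_z\cong U^-_z\otimes U^0_z\otimes U^+_z$ of \eqref{eq:triang}, i.e. $w$ corresponds to $\sum_m c_m\,f_i^m\otimes k_i^m\otimes b_m$. As $w\in U^+_z$ lies in $1\otimes1\otimes U^+_z$, and the elements $f_i^m\otimes k_i^m$ ($m\geqq0$) are linearly independent in $U^-_z\otimes U^0_z$ (the $f_i^m$ are scalar multiples of PBW basis vectors and the $k_{mt_i}$ are distinct basis vectors of $U^0_z$), we conclude $c_mb_m=0$, hence $b_m=0$, for all $m\geqq1$. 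Thus $w=b_0\in B$, proving $U^+_z\cap T_i(U^+_z)=B$ and therefore \eqref{eq:tdz1}.

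\emph{The remaining cases.} The isomorphisms \eqref{eq:tdz2}, \eqref{eq:tdz3}, \eqref{eq:tdz4} follow in the same way, choosing in each case the triangular decomposition of \eqref{eq:triang} that places the relevant $U^\pm_z$-factor on the side where no reordering of opposite generators is needed. For \eqref{eq:tdz2} one uses $T_i^{-1}(e_i)=-k_i^{-1}f_i$ together with $U_z\cong U^+_z\otimes U^0_z\otimes U^-_z$; for \eqref{eq:tdz3}, \eqref{eq:tdz4} one repeats the argument with $U^+$ replaced by $U^-$, $e_i$ by $f_i$, and $T_i(f_i)=-k_i^{-1}e_i$, $T_i^{-1}(f_i)=-e_ik_i$. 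I expect the only genuine obstacle to be this reverse inclusion: showing that base change does not enlarge the intersections $U^\pm_z\cap T_i^{\pm1}(U^\pm_z)$. Everything else is formal once one records the flatness of the modules in Lemma \ref{lem:tdR} and the explicit images $T_i^{\pm1}(e_i)$, $T_i^{\pm1}(f_i)$.
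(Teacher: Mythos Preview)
Your proof is correct and follows essentially the same approach as the paper: base-change Lemma~\ref{lem:tdR} along $\tA\to K$, then use the triangular decomposition of $U_z$ together with the action of $T_i$ to rule out any enlargement of the intersection. The only cosmetic difference is that the paper packages the crux into the single line ``apply $T_i$ to $U_z^+\otimes U_z^{\leqq0}\cong U_z$'' to get injectivity of the multiplication map $(U_z^+\cap T_i(U_z^+))\otimes\bigl(\bigoplus_n Ke_i^{(n)}\bigr)\to U_z$, whereas you unpack the same content by writing $T_i(e_i^{(m)})$ explicitly as a scalar times $f_i^m k_i^m$ and reading off the reverse inclusion from the untwisted decomposition $U_z\cong U_z^-\otimes U_z^0\otimes U_z^+$.
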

\begin{proof}
We only show \eqref{eq:tdz1}.
By Lemma \ref{lem:tdR}
we have
\[
U_z^+\cong
\left(K\otimes_\tA(U_\tA^+\cap T_i(U_\tA^+))
\right)
\otimes
\left(\bigoplus_{n=0}^\infty K e_i^{(n)}\right).
\]
By $U_\tA^+\cap T_i(U_\tA^+)=U_\tA^+\cap T_i(U^+)$
the canonical map
$K\otimes_\tA(U_\tA^+\cap T_i(U_\tA^+))
\to
U_z^+\cap T_i(U_z^+)$
is injective.
Hence we have a sequence of canonical maps
\begin{align*}
U_z^+
\cong&
\left(K\otimes_\tA(U_\tA^+\cap T_i(U_\tA^+))
\right)
\otimes
\left(\bigoplus_{n=0}^\infty K e_i^{(n)}\right)
\\
\hookrightarrow&
 (U_z^+\cap T_i(U_z^+))\otimes
\left(\bigoplus_{n=0}^\infty K e_i^{(n)}\right)\to U_z^+.
\end{align*}
Therefore, it is sufficient to show that 
\[
 (U_z^+\cap T_i(U_z^+))\otimes
\left(\bigoplus_{n=0}^\infty K e_i^{(n)}\right)\to U_z
\]
is injective.
This follows by applying $T_i$ to $U_z^+\otimes U_z^{\leqq0}\cong U_z$.
\end{proof}

We set 
\[
{}^\sharp U^0_z=K\otimes_\tA {}^\sharp U^0_\tA,\quad
{}^\sharp U^{\geqq0}_z=K\otimes_\tA {}^\sharp U^{\geqq0}_\tA,
\quad
{}^\sharp U^{\leqq0}_z=K\otimes_\tA {}^\sharp U^{\leqq0}_\tA.
\]
They are Hopf subalgebras of $U_z$.
The Drinfeld pairing induces a bilinear form
\[
\tau_z:{}^\sharp{U}_z^{\geqq0}\times{}^\sharp{U}_z^{\leqq0}\to K.
\]
For $\gamma\in Q^+$ we denote its restriction to $U^+_{z,\gamma}\times U^-_{z,-\gamma}$ by
\[
\tau_{z,\gamma}:U^+_{z,\gamma}\times U^-_{z,-\gamma}\to K.
\]

\section{The modified algebra}
Set
\begin{align*}
J^{+}_{z}
=&
\{x\in U^{+}_{z}\mid
\tau_{z}(x,U^{-}_{z})=\{0\}\},
\\
J^{-}_{z}
=&
\{y\in U^{-}_{z}\mid
\tau_{z}(U^{+}_{z},y)=\{0\}\}.
\end{align*}
For $\gamma\in Q^+$ we set
\[
J_{z,\pm\gamma}^{\pm}=J_z^\pm\cap U^\pm_{z,\pm\gamma}.
\]
By  \eqref{eq:D7}
we have
\begin{equation}
\label{eq:J2}
J_z^\pm=\bigoplus_{\gamma\in Q^+\setminus\{0\}}
J_{z,\pm\gamma}^\pm.
\end{equation}
Define a two-sided ideal $J_z$ of $U_z$ by
\[
J_z=U_zJ^{+}_zU_z+U_zJ_z^{-}U_z.
\]
\begin{proposition}
\label{prop:J}
\begin{itemize}
\item[(i)] 
We have
\[
\Delta(J_z)\subset U_{z}\otimes J_z
+J_z\otimes U_{z},
\quad
\varepsilon(J_z)=\{0\},
\quad
S(J_z)\subset J_z.
\]
\item[(ii)] Under the isomorphism 
$U_z\cong U_z^+\otimes U_z^0\otimes U_z^-$
$($resp. $U_z\cong U_z^-\otimes U_z^0\otimes U_z^+$$)$ 
induced by the multiplication of $U_z$
we have
\[
J_z\cong
J_z^+\otimes U_z^0\otimes U_z^-
+U_z^+\otimes U_z^0\otimes J_z^-,
\]
\[
(\text{resp.}\;
J_z\cong
J_z^-\otimes U_z^0\otimes U_z^+
+U_z^-\otimes U_z^0\otimes J_z^+).
\]
\end{itemize}

\end{proposition}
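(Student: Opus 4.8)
The plan is to reduce both parts to two structural facts about the radicals $J_z^+$ and $J_z^-$: each is a two-sided ideal of $U_z^{\pm}$, and each is a ``coideal'' in a graded sense; granting these, part (i) is essentially formal and part (ii) needs one straightening computation. I first check the ideal property. For $x\in J_z^+$, $x'\in U_z^+$, $y\in U_z^-$, formula \eqref{eq:D2} gives $\tau_z(x'x,y)=\sum_{(y)}\tau_z(x,y_{(0)})\tau_z(x',y_{(1)})$ and $\tau_z(xx',y)=\sum_{(y)}\tau_z(x',y_{(0)})\tau_z(x,y_{(1)})$. Since each $y_{(0)},y_{(1)}$ lies in $U_z^-$ up to a $k_\gamma$-factor harmless under \eqref{eq:D6}, and $x$ annihilates $U_z^-$ under $\tau_z$, both expressions vanish; thus $U_z^+J_z^+,\,J_z^+U_z^+\subseteq J_z^+$. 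By \eqref{eq:J2} the spaces $J_z^{\pm}$ are $Q$-graded, hence stable under conjugation by $U_z^0$. The case of $J_z^-$ uses \eqref{eq:D1} symmetrically.

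The hard part will be the coideal property
\[
\Delta(J_z^+)\subseteq J_z^+\,{}^\sharp U_z^0\otimes U_z^+ + U_z^+\,{}^\sharp U_z^0\otimes J_z^+,
\]
and its analogue for $J_z^-$. To prove it, take homogeneous $x\in J_{z,\gamma}^+$ and write the $U_{z,\beta}^+$-component of $\Delta(x)$ as $\sum_j a_jk_{\gamma-\beta}\otimes b_j$ with $a_j\in U_{z,\beta}^+$, $b_j\in U_{z,\gamma-\beta}^+$. Pairing against $y_1y_2$ with $y_1\in U_{z,-\beta}^-$, $y_2\in U_{z,-(\gamma-\beta)}^-$ and using \eqref{eq:D1}, \eqref{eq:D6}, \eqref{eq:D7} yields $\sum_j\tau_z(a_j,y_1)\tau_z(b_j,y_2)=0$. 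Moving the summands with $b_j\in J_z^+$ into the second term, and choosing the rest so that their images in $U_{z,\gamma-\beta}^+/J_{z,\gamma-\beta}^+$ are linearly independent, the vanishing forces $\tau_z(a_j,U_{z,-\beta}^-)=0$, i.e.\ $a_j\in J_z^+$, putting those summands in the first term. This linear-algebra step, which is where the definition of the radical is genuinely used, is the technical heart of the proof.

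Part (i) now follows. As $\Delta,\varepsilon$ are homomorphisms, $S$ an anti-homomorphism, and $J_z$ the two-sided ideal generated by $J_z^{\pm}$, it suffices to verify the three conditions on the generators. The coproduct inclusion is immediate from the coideal property together with $J_z^+\,{}^\sharp U_z^0\subseteq J_z$ and the two-sidedness of $J_z$. For the counit, $\varepsilon$ kills $U_{z,\pm\gamma}^{\pm}$ for $\gamma\ne0$, while $J_z^{\pm}$ has no weight-$0$ part by \eqref{eq:J2}. For the antipode I would write $S(x)=k_{-\gamma}\hat S(x)$ with $\hat S(x)\in U_{z,\gamma}^+$ for $x\in U_{z,\gamma}^+$, and $S(y)=\hat S(y)k_\gamma$ on the $U^-$-side; then \eqref{eq:D6} and \eqref{eq:D9} give $\tau_z(\hat S x,\hat S y)=\tau_z(x,y)$, and since $\hat S$ is a bijection of $U_{z,-\gamma}^-$ this shows $\hat S(J_z^+)\subseteq J_z^+$, whence $S(J_z^+)\subseteq{}^\sharp U_z^0J_z^+\subseteq J_z$; similarly $S(J_z^-)\subseteq J_z$.

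For part (ii), set $M=J_z^+U_z^0U_z^-+U_z^+U_z^0J_z^-$, viewed inside $U_z$ via multiplication; the inclusion $M\subseteq J_z$ is clear. For $J_z\subseteq M$, the triangular decomposition and the ideal property of the first step reduce matters to the closures $M U_z^0,\,M U_z^-,\,U_z^+M,\,U_z^0M\subseteq M$ (routine) together with $U_z^-J_z^+\subseteq M$ and $J_z^-U_z^+\subseteq M$. To get $U_z^-J_z^+\subseteq M$ I would straighten $yx$ $(y\in U_z^-,\ x\in J_z^+)$ by the specialization of \eqref{eq:D11},
\[
yx=\sum_{(x)_2,(y)_2}\tau_z(Sx_{(0)},y_{(0)})\tau_z(x_{(2)},y_{(2)})\,x_{(1)}y_{(1)}.
\]
Iterating the coideal property, in $\Delta_2(x)$ the $U^+$-part of one of $x_{(0)},x_{(1)},x_{(2)}$ lies in $J_z^+$. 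The terms with it in $x_{(0)}$ vanish because $S(J_z^+)\subseteq{}^\sharp U_z^0J_z^+$ annihilates $U_z^{\leqq0}$ under $\tau_z$; the terms with it in $x_{(2)}$ vanish because $\tau_z(J_z^+,U_z^-)=0$; the remaining terms have $x_{(1)}y_{(1)}\in J_z^+\,{}^\sharp U_z^0U_z^-\subseteq M$. The inclusion $J_z^-U_z^+\subseteq M$ is symmetric (using \eqref{eq:D10}, or \eqref{eq:D11} with the roles of the two radicals exchanged), and the ``resp.'' decomposition for $U_z\cong U_z^-\otimes U_z^0\otimes U_z^+$ is handled identically. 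The single genuine obstacle is the coideal property of the second paragraph; everything else is bookkeeping with the standard properties \eqref{eq:D1}, \eqref{eq:D2}, \eqref{eq:D6}, \eqref{eq:D9}, \eqref{eq:D10}, \eqref{eq:D11} of the Drinfeld pairing.
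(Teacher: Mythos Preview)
Your proof is correct and follows essentially the same route as the paper's: establish that $J_z^\pm$ are two-sided ideals of $U_z^\pm$ (via \eqref{eq:D1}, \eqref{eq:D2}), prove the coideal inclusion for $\Delta(J_z^\pm)$ (via \eqref{eq:D1}), handle the antipode with \eqref{eq:D9}, and for part (ii) straighten $U_z^-J_z^+$ and $J_z^-U_z^+$ using \eqref{eq:D10}, \eqref{eq:D11} together with the iterated coideal property.

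Two small remarks. First, what you call ``the technical heart'' the paper dispatches in one line: it observes that $J_z^+{}^\sharp U_z^0=\{x\in{}^\sharp U_z^{\geqq0}\mid\tau_z(x,U_z^-)=0\}$, so the desired inclusion for $\Delta(J_z^+)$ is equivalent to $(\tau_z\otimes\tau_z)(\Delta(J_z^+),U_z^-\otimes U_z^-)=0$, which is immediate from \eqref{eq:D1}. Your explicit basis argument is the unpacking of why that equivalence holds (radical of a tensor pairing), so the content is the same; note that your step ``the vanishing forces $\tau_z(a_j,U_{z,-\beta}^-)=0$'' implicitly uses that the induced pairing $\overline{U}^+_{z,\gamma-\beta}\times\overline{U}^-_{z,-(\gamma-\beta)}$ is nondegenerate on \emph{both} sides, which follows from $\dim U^+_{z,\delta}=\dim U^-_{z,-\delta}$ and elementary rank considerations. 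Second, for part (ii) the paper packages the straightening as the single equality $J_z^+{}^\sharp U_z^{\leqq0}={}^\sharp U_z^{\leqq0}J_z^+$ (and its $-$ analogue), from which the triangular description of $J_z$ follows at once; your formulation $U_z^-J_z^+\subseteq M$ is an equivalent way to organize the same computation.
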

\begin{proof}
(i) It is sufficient to show
\begin{align}
\label{eq:pJ1}
\Delta(J_z^+)\subset& 
J_z^+ {}^\sharp U_z^0\otimes U^+_z
+
{}^\sharp U_z^{\geqq0}\otimes J_z^+,
\\
\label{eq:pJ2}
\Delta(J_z^-)\subset& 
J_z^-\otimes {}^\sharp U^{\leqq0}_z
+
{}^\sharp U_z^{-}\otimes J_z^-{}^\sharp U_z^0,
\\
\label{eq:pJ3}
\varepsilon(J_z^\pm)=&\{0\},
\\
\label{eq:pJ4}
S(J_z^\pm)\subset& J_z^\pm {}^\sharp U_z^0.
\end{align}
By \eqref{eq:D6} we have 
\[
J^{+}_{z}{}^\sharp U_z^0
=
\{x\in {}^\sharp U^{\geqq0}_{z}\mid
\tau_{z}(x,U^{-}_{z})=\{0\}\}.
\]
Hence in order to verify \eqref{eq:pJ1} it is sufficient to show
\[
\tau_z(\Delta(J_z^+),U_z^-\otimes U_z^-)=\{0\}.
\]
This follows from \eqref{eq:D1}.
The proof of \eqref{eq:pJ2} is similar.
The assertions \eqref{eq:pJ3} and \eqref{eq:pJ4} follow from \eqref{eq:J2} and \eqref{eq:D9}, respectively.

(ii) It is sufficient to show 
\begin{align}
\label{eq:pJ5}
&J_z^\pm U_z^\pm=U_z^\pm J_z^\pm=J_z^\pm,
\\
\label{eq:pJ6}
&J_z^{+}U_z^{\leqq0}=U_z^{\leqq0} J_z^{+},
\qquad
J_z^{-}U_z^{\geqq0}=U_z^{\geqq0} J_z^{-}.
\end{align}
The assertion \eqref{eq:pJ5} follows from 
\eqref{eq:D1}, \eqref{eq:D2}, \eqref{eq:D6}.
By \eqref{eq:J2} we have $J_z^\pm U_z^0=U_z^0J_z^\pm$.
Hence in order to show \eqref{eq:pJ6} it is sufficient to show
$J_z^{+}{}^\sharp U_z^{\leqq0}={}^\sharp U_z^{\leqq0} J_z^{+}$ and 
$
J_z^{-}{}^\sharp U_z^{\geqq0}={}^\sharp U_z^{\geqq0} J_z^{-}$.
Let $x\in J_z^{+}$, $y\in {}^\sharp U_z^{\leqq0}$.
By \eqref{eq:pJ1} we have
\begin{align*}
&\Delta_2(x)
\\
\in&
{}^\sharp U^{\geqq0}_{z}\otimes {}^\sharp U^{\geqq0}_{z}\otimes J_z^{+}
+
{}^\sharp U^{\geqq0}_{z}\otimes J_z^{+}{}^\sharp U_z^0\otimes U_z^++
J_z^{+}{}^\sharp U_z^0\otimes {}^\sharp U^{\geqq0}_{z}\otimes U_z^+.
\end{align*}
Hence we have $xy\in {}^\sharp U_z^{\leqq0}J_z^{+}$ and 
$yx\in J_z^{+}{}^\sharp U_z^{\leqq0}$ by \eqref{eq:D10}, \eqref{eq:D11}.
It follows that $J_z^{+}{}^\sharp U_z^{\leqq0}={}^\sharp U_z^{\leqq0} J_z^{+}$.
The proof of $J_z^{-}{}^\sharp U_z^{\geqq0}={}^\sharp U_z^{\geqq0} J_z^{-}$ is similar.
\end{proof}
By \eqref{eq:D12}, \eqref{eq:D13}, \eqref{eq:D14} we see easily the following.
\begin{lemma}
\label{lem:tdJ}
For $i\in I$ we have
\begin{align*}
J_z^-\cong& (J_z^-\cap T_i(U_z^-))\otimes
\left(\bigoplus_{n=0}^\infty  Kf_i^{(n)}\right),
\\
J_z^-\cong&
\left(\bigoplus_{n=0}^\infty K f_i^{(n)}\right)
\otimes (J_z^-\cap T_i^{-1}(U_z^-)).
\end{align*}
Moreover, we have
\[
T_i^{-1}(J_z^-\cap T_i(U_z^-))=J_z^-\cap T_i^{-1}(U_z^-).
\]
\end{lemma}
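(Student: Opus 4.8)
The plan is to refine the triangular decompositions of Lemma~\ref{lem:tdz} for $U_z^-$ by keeping track of which tensor components land in the radical $J_z^-$, using the fact that the Drinfeld pairing is ``diagonal'' with respect to the powers of $e_i$ and $f_i$. First I would record this diagonality at the specialized level. Rewriting \eqref{eq:D12} in terms of divided powers and specializing at $q\mapsto z$ gives, for $x\in U_z^+\cap T_i(U_z^+)$ and $y\in U_z^-\cap T_i(U_z^-)$,
\[
\tau_z(xe_i^{(m)},\,yf_i^{(n)})=\delta_{mn}\,d_n\,\tau_z(x,y),
\qquad
d_n=\frac{z_i^{n(n-1)/2}}{(z_i^{-1}-z_i)^n[n]!_{z_i}},
\]
where $d_0=1$ and $d_n\in K^\times$, since these quantities are units in $\tA$ and $z$ (hence $z_i$) is not a root of $1$. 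The analogue of \eqref{eq:D13} yields the same identity with the divided powers moved to the left, for $x\in U_z^+\cap T_i^{-1}(U_z^+)$ and $y\in U_z^-\cap T_i^{-1}(U_z^-)$.

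Next I would read off $J_z^-$ from \eqref{eq:tdz3}. Writing a general element as $y=\sum_n y_nf_i^{(n)}$ with $y_n\in U_z^-\cap T_i(U_z^-)$, and expanding an arbitrary element of $U_z^+$ via \eqref{eq:tdz1} as $\sum_m x_me_i^{(m)}$ with $x_m\in U_z^+\cap T_i(U_z^+)$, the displayed formula gives $\tau_z(x_me_i^{(m)},y)=d_m\tau_z(x_m,y_m)$. Since $d_m\neq0$, we get that $y\in J_z^-$ iff $\tau_z(U_z^+\cap T_i(U_z^+),y_m)=\{0\}$ for every $m$. The point is then to upgrade this to $y_m\in J_z^-$: as $y_m\in U_z^-\cap T_i(U_z^-)$, relation \eqref{eq:UT3} shows $\tau_z(U_z^+e_i,y_m)=\{0\}$, and because $e_i^{(\ell)}\in U_z^+e_i$ for $\ell\geqq1$, the pairing of $y_m$ against all of $U_z^+$ reduces to its pairing against $U_z^+\cap T_i(U_z^+)$. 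Hence $y\in J_z^-$ iff each $y_m\in J_z^-\cap T_i(U_z^-)$, which is the first isomorphism; the second follows identically from \eqref{eq:D13}, \eqref{eq:UT4} and \eqref{eq:tdz4}, \eqref{eq:tdz2}.

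For the last assertion I would invoke the $T_i$-invariance \eqref{eq:D14} of the pairing. Given $y\in J_z^-\cap T_i(U_z^-)$, one has $T_i^{-1}(y)\in U_z^-\cap T_i^{-1}(U_z^-)$, and for any $x'\in U_z^+\cap T_i^{-1}(U_z^+)$ the element $x=T_i(x')$ lies in $U_z^+\cap T_i(U_z^+)$, so \eqref{eq:D14} gives $\tau_z(x',T_i^{-1}(y))=\tau_z(x,y)=0$. By the $T_i^{-1}$-analogue of the criterion above, this forces $T_i^{-1}(y)\in J_z^-$, proving $T_i^{-1}(J_z^-\cap T_i(U_z^-))\subset J_z^-\cap T_i^{-1}(U_z^-)$; the reverse inclusion is symmetric, now applying \eqref{eq:D14} in the form $\tau_z(x,T_i(y'))=\tau_z(T_i^{-1}(x),y')$ together with the first criterion.

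The main obstacle is precisely the upgrade step in the second paragraph: the radical $J_z^-$ is defined by pairing against all of $U_z^+$, whereas the triangular decomposition only controls pairings against the $T_i$-stable subalgebra $U_z^+\cap T_i(U_z^+)$. Bridging the two requires exactly the orthogonality built into \eqref{eq:UT3}/\eqref{eq:UT4}, combined with the non-vanishing of the scalars $d_n$ after specialization, which is where the hypothesis that $z$ is not a root of $1$ is used. Everything else is the routine bookkeeping of matching the two factors in the tensor decompositions of Lemma~\ref{lem:tdz}.
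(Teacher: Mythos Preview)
Your proof is correct and follows exactly the route the paper signals: the paper's own proof consists of the single sentence ``By \eqref{eq:D12}, \eqref{eq:D13}, \eqref{eq:D14} we see easily the following,'' and you have supplied precisely the details behind that sentence. One small remark: your appeal to \eqref{eq:UT3} is stated over $\BF$, but what you actually use---that $\tau_z(U_z^+e_i,y_m)=0$ for $y_m\in U_z^-\cap T_i(U_z^-)$---is already an immediate consequence of the specialized \eqref{eq:D12} with $n=0$, so no separate specialization argument is needed there.
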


We set
\begin{equation}
\overline{U}_z=
U_z/J_z.
\end{equation}
It is a Hopf algebra by Proposition \ref{prop:J}.
Denote by $\overline{U}_z^0$, $\overline{U}_z^\pm$, $\overline{U}_z^{\geqq0}$, $\overline{U}_z^{\leqq0}$, 
${}^\sharp \overline{U}_z^{0}$, 
${}^\sharp \overline{U}_z^{\geqq0}$, ${}^\sharp \overline{U}_z^{\leqq0}$, 
$\overline{U}_{z,\pm\gamma}^\pm$\;($\gamma\in Q^+$) the images of
${U}_z^0$, ${U}_z^\pm$, ${U}_z^{\geqq0}$, ${U}_z^{\leqq0}$, 
${}^\sharp U_z^{0}$, 
${}^\sharp {U}_z^{\geqq0}$, ${}^\sharp {U}_z^{\leqq0}$, 
${U}_{z,\pm\gamma}^\pm$ under $U_z\to\overline{U}_z$ respectively.
By the above argument we have
\[
\overline{U}_z
\cong
\overline{U}_z^+\otimes \overline{U}_z^0\otimes \overline{U}_z^-
\cong
\overline{U}_z^-\otimes \overline{U}_z^0\otimes \overline{U}_z^+,
\]
\[
\overline{U}_z^{\geqq0}
\cong
\overline{U}_z^+\otimes \overline{U}_z^0
\cong
\overline{U}_z^0\otimes \overline{U}_z^+,
\qquad
\overline{U}_z^{\leqq0}
\cong
\overline{U}_z^-\otimes \overline{U}_z^0
\cong
\overline{U}_z^0\otimes \overline{U}_z^-,
\]
\[
{}^\sharp\overline{U}_z^{\geqq0}
\cong
\overline{U}_z^+\otimes {}^\sharp\overline{U}_z^0
\cong
{}^\sharp\overline{U}_z^0\otimes \overline{U}_z^+,
\qquad
{}^\sharp\overline{U}_z^{\leqq0}
\cong
\overline{U}_z^-\otimes {}^\sharp\overline{U}_z^0
\cong
{}^\sharp\overline{U}_z^0\otimes \overline{U}_z^-,
\]
\[
\overline{U}_z^0
\cong
U_z^0=\bigoplus_{h\in\Gh_\BZ}K k_h,
\qquad
{}^\sharp
\overline{U}_z^0
\cong
{}^\sharp
U_z^0
=\bigoplus_{\gamma\in Q}K k_\gamma,
\]
and 
\begin{equation}
\label{eq:decc}
\overline{U}_z^\pm=
\bigoplus_{\gamma\in Q^+}
\overline{U}_{z,\pm\gamma}^\pm,\qquad
\overline{U}_{z,\pm\gamma}^\pm
\cong
U_{z,\pm\gamma}^\pm/J_{z,\pm\gamma}^\pm.
\end{equation}
By definition $\tau_z$ induces a bilinear form
\[
\overline{\tau}_z:{}^\sharp\overline{U}_z^{\geqq0}\times{}^\sharp\overline{U}_z^{\leqq0}\to K
\]
such that for any $\gamma\in Q^+$ its restriction 
\[
\overline{\tau}_{z,\gamma}:\overline{U}^+_{z,\gamma}\times \overline{U}^-_{z,-\gamma}\to K
\]
is non-degenerate.

For $\lambda\in P$ and a $\overline{U}_z$-module $V$ we set
\[
V_\lambda=\{v\in V\mid k_hv=z^{\langle\lambda,h\rangle}v\;(h\in\Gh_\BZ)\}.
\]
We define a category $\CO(\oU_z)$ as follows.
Its objects are $\oU_z$-modules $V$ which satisfy
\begin{equation}
\label{eq:wd}
V=\bigoplus_{\lambda\in P}V_\lambda,
\qquad
\dim V_\lambda<\infty\quad(\lambda\in P),
\end{equation}
and such that
there exist finitely many $\lambda_1,\dots, \lambda_r\in P$ such that
\[
\{\lambda\in P\mid V_\lambda\ne\{0\}\}
\subset \bigcup_{k=1}^r(\lambda_k-Q^+).
\]
The morphisms are homomorphisms of $\oU_z$-modules.

We say that a $\oU_z$-module $V$ is integrable if 
$V=\bigoplus_{\lambda\in P}V_\lambda$ and 
for any $v\in V$ there exists $N>0$ such that for $i\in I$ and $n\geqq N$ we have
$e_i^{(n)}v=f_i^{(n)}v=0$.
We denote by $\CO^{\int}(\oU_z)$ the full subcategory of $\CO(\oU_z)$ consisting of integrable $\oU_z$-modules belonging to $\CO(\oU_z)$.

For each coset $C=\mu+Q\in P/Q$ we denote by $\CO_C(\oU_z)$ the full subcategory of $\CO(\oU_z)$ consisting of 
$V\in \CO_C(\oU_z)$ such that $V=\bigoplus_{\lambda\in C}V_\lambda$.
We also set $\CO_C^{\int}(\oU_z)=\CO_C(\oU_z)\cap\CO^\int(\oU_z)$.
Then we have
\begin{equation}
\CO(\oU_z)=\bigoplus_{C\in P/Q}\CO_C(\oU_z),
\qquad
\CO^{\int}(\oU_z)=\bigoplus_{C\in P/Q}\CO_C^{\int}(\oU_z).
\end{equation}

For $\lambda\in P$ we define $M_z(\lambda)\in\CO_{\lambda+Q}(\oU_z)$ by
\[
M_z(\lambda)=
\oU_z/
\left(
\sum_{h\in\Gh_\BZ}\oU_z(k_h-z^{\langle\lambda,h\rangle})
+\sum_{i\in I}\oU_ze_i
\right),
\]
and for $\lambda\in P^+$ we define $V_z(\lambda)\in\CO_{\lambda+Q}^{\int}(\oU_z)$ by
\[
V_z(\lambda)=
\oU_z/
\left(
\sum_{h\in\Gh_\BZ}\oU_z(k_h-z^{\langle\lambda,h\rangle})
+\sum_{i\in I}\oU_ze_i
+\sum_{i\in I}\oU_zf_i^{(\langle\lambda,h_i\rangle+1)}
\right).
\]

Let $\lambda\in P$.
A $\oU_z$-module $V$ is called a highest weight module with highest weight $\lambda$ if  there exists $v\in V_\lambda\setminus\{0\}$ such that 
$V=\oU_z v$ and $xv=\varepsilon(x)v$\;($x\in \oU_z^+$).
Then we have $V\in\CO_{\lambda+Q}(\oU_z)$.
A $\oU_z$-module is a highest weight module with highest weight $\lambda$ if and only if it is a non-zero quotient of $M_z(\lambda)$.
If there exists an integrable highest weight module with highest weight $\lambda$, then we have $\lambda\in P^+$.
For $\lambda\in P^+$ 
a $\oU_z$-module is an integrable highest weight module with highest weight $\lambda$ if and only if it is a non-zero quotient of $V_z(\lambda)$.

For $V\in\CO(\oU_z)$ we define its formal character by
\[
\ch(V)=
\sum_{\lambda\in P}\dim V_\lambda e(\lambda)\in \CE.
\]
We have 
\[
\ch(M_z(\lambda))=e(\lambda)\overline{D}^{-1}\qquad
(\lambda\in P),
\]
where
\[
\overline{D}^{-1}=
\sum_{\gamma\in Q^+}\dim\oU^-_{z,-\gamma}e(-\gamma)\qquad
(\lambda\in P).
\]

For each coset $C=\mu+Q\in P/Q$ we fix a function $f_C:C\to \BZ$ such that
\[
f_C(\lambda)-f_C(\lambda-\alpha_i)
=2\langle\lambda,t_i\rangle
\qquad(\lambda\in C,i\in I).
\]
\begin{remark}
The function $f_C$ is unique up to addition of a constant function.
If we extend $(\;,\;):E\times E\to\BQ$ to a $W$-invariant symmetric bilinear form on $\Gh^*$, then $f_C$ is given by
\[
f_C(\lambda)=(\lambda+\rho,\lambda+\rho)+a
\qquad(\lambda\in C)
\]
for some $a\in \BQ$.

\end{remark}

For $\gamma\in Q^+$ let
$
\overline{C}_\gamma\in
\overline{U}^+_{z,\gamma}\otimes \overline{U}^-_{z,-\gamma}
$
be
the canonical element of the non-degenerate bilinear form $\overline{\tau}_{z,\gamma}$.
Following Drinfeld we set
\[
{\Omega}_\gamma=
(m\circ (S\otimes 1)\circ P)(\overline{C}_\gamma)
\in \overline{U}^-_{z,-\gamma}\overline{U}_z^0\overline{U}^+_{z,\gamma},
\]
where $m:\overline{U}_z\otimes\overline{U}_z\to \overline{U}_z$ and $P:\overline{U}_z\otimes \overline{U}_z\to \overline{U}_z\otimes \overline{U}_z$ are given by
$m(a,b)=ab$, $P(a\otimes b)=b\otimes a$
(see \cite[Section 3.2]{T0}, \cite[Section 6.1]{Lbook}).
Let $C\in P/Q$.
For $V\in\CO_C(\overline{U}_z)$ we define a linear map
\begin{equation}
\Omega:V\to V
\end{equation}
by
\[
\Omega(v)=z^{f_C(\lambda)}\sum_{\gamma\in Q^+}\Omega_\gamma v
\qquad(v\in V_\lambda).
\]
This operator is called the quantum Casimir operator.
As in \cite[Section 3.2]{T0} we have the following.
\begin{proposition}
Let $C\in P/Q$.
For $\lambda\in C$ the operator $\Omega$ acts on $M_z(\lambda)$ as $z^{f_C(\lambda)}\id$.
\end{proposition}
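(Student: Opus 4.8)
The plan is to deduce the statement from two facts: that the quantum Casimir operator $\Omega$ is an endomorphism of $M_z(\lambda)$ as a $\oU_z$-module, and that it acts on the canonical generator by the scalar $z^{f_C(\lambda)}$. Let $v_\lambda$ be the image of $1$ in $M_z(\lambda)$, a highest weight vector of weight $\lambda$. Granting the two facts, since $M_z(\lambda)=\oU_z v_\lambda$, for every $u\in\oU_z$ we get $\Omega(uv_\lambda)=u\,\Omega(v_\lambda)=z^{f_C(\lambda)}uv_\lambda$, whence $\Omega=z^{f_C(\lambda)}\id$. First note that $\Omega$ is well defined: each $\Omega_\gamma$ lies in $\oU^-_{z,-\gamma}\oU^0_z\oU^+_{z,\gamma}$ and hence preserves weight spaces, and for $v\in M_z(\lambda)_{\lambda-\beta}$ the component $\Omega_\gamma v$ vanishes unless $\beta-\gamma\in Q^+$, so the sum defining $\Omega$ is finite on each weight space.

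The evaluation on $v_\lambda$ is the easy half. For $\gamma=0$ the pairing $\overline\tau_{z,0}$ is the identification $K\times K\to K$ with $\overline\tau_{z,0}(1,1)=1$ (by \eqref{eq:D3}, as $k_0=1$), so its canonical element is $1\otimes1$ and $\Omega_0=(m\circ(S\otimes1)\circ P)(1\otimes1)=1$. For $\gamma\neq0$, writing $\overline C_\gamma=\sum_a x_a\otimes y_a$ with $x_a\in\oU^+_{z,\gamma}$, $y_a\in\oU^-_{z,-\gamma}$, we have $\Omega_\gamma=\sum_a S(y_a)x_a$; since $x_a$ has positive weight $\gamma\neq0$ and $v_\lambda$ is a highest weight vector, $x_av_\lambda=\varepsilon(x_a)v_\lambda=0$, so $\Omega_\gamma v_\lambda=0$. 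Hence $\sum_\gamma\Omega_\gamma v_\lambda=v_\lambda$ and $\Omega(v_\lambda)=z^{f_C(\lambda)}v_\lambda$.

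The substantive point, and the main obstacle, is the $\oU_z$-linearity of $\Omega$ on $\CO_C(\oU_z)$, which I would establish exactly as in \cite[Section 3.2]{T0}. Since $\Omega$ manifestly commutes with each $k_h$ (the $\Omega_\gamma$ preserve weights and the scalar $z^{f_C(\lambda)}$ depends only on the weight), it suffices to treat $e_j$ and $f_j$. For $v\in V_\lambda$ the relation $\Omega(f_jv)=f_j\Omega(v)$ unwinds, using $f_jv\in V_{\lambda-\alpha_j}$ together with the defining property $f_C(\lambda)-f_C(\lambda-\alpha_j)=2\langle\lambda,t_j\rangle$ of $f_C$, to the twisted commutation
\[
\sum_{\gamma}\Omega_\gamma f_jv=z^{2\langle\lambda,t_j\rangle}f_j\sum_\gamma\Omega_\gamma v,
\]
and symmetrically for $e_j$. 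To prove this I would introduce the quasi-$R$-matrix $\Theta=\sum_\gamma\overline C_\gamma$, well defined because each $\overline\tau_{z,\gamma}$ is non-degenerate, and derive its intertwining with the generators from the coproduct-compatibility \eqref{eq:D1}, \eqref{eq:D2} of $\overline\tau_z$ together with the explicit coproducts of $e_j,f_j$. Transporting these recursions through $(m\circ(S\otimes1)\circ P)$ by means of the antipode identity \eqref{eq:D9} yields relations linking $\Omega_\gamma f_j$ with $f_j\Omega_{\gamma-\alpha_j}$ up to explicit powers of $z_j$; summing over $\gamma$, the weight-dependent powers collapse to $z^{2\langle\lambda,t_j\rangle}$ precisely because $f_C$ was chosen to satisfy the above difference equation. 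Matching these $z$-powers against the $f_C$-twist is the delicate bookkeeping step, but it is purely formal and identical to the non-degenerate situation already treated in \cite{T0}.
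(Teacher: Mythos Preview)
Your proposal is correct and follows precisely the approach the paper indicates: the paper offers no proof beyond the pointer ``As in \cite[Section 3.2]{T0}'', and your argument---computing $\Omega$ on the highest weight vector directly and invoking the $\oU_z$-linearity of $\Omega$ established via the quasi-$R$-matrix recursions of \cite[Section 3.2]{T0}---is exactly that referenced argument spelled out. The only minor remark is that your well-definedness check and the computation on $v_\lambda$ are routine additions that the paper leaves implicit, but they are correct as written.
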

Since $z$ is not a root of 1, we have
\[
z^{f_C(\lambda)}=z^{f_C(\mu)}
\;\Longrightarrow\;
f_C(\lambda)=f_C(\mu).
\]

\section{Main results}
For $w\in W$ and $x=\sum_{\lambda\in P}c_\lambda e(\lambda)\in\CE$ we set
\[
wx=
\sum_{\lambda\in P}c_\lambda e(w\lambda),
\qquad
w\circ x=
\sum_{\lambda\in P}c_\lambda e(w\circ\lambda).
\]
The elements $wx$, $w\circ x$ may not belong to $\CE$; however, we will only consider the case where $wx, w\circ x\in \CE$.

We denote by
$\sgn:W\to\{\pm1\}$ the character given by $\sgn(s_i)=-1$ for $i\in I$.
\begin{proposition}
For any $w\in W$ we have $w\circ \overline{D}=\sgn(w)\overline{D}$.\end{proposition}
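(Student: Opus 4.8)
The plan is to reduce to the generators $s_i$ and prove $s_i\circ\overline{D}=-\overline{D}$ for each $i\in I$. Since $w_1\circ(w_2\circ\lambda)=(w_1w_2)\circ\lambda$, the twisted action is a genuine left action of $W$ on $\CE$; as the $s_i$ generate $W$ and $\sgn(s_i)=-1$, applying $s_i\circ\overline{D}=-\overline{D}$ repeatedly along any word $w=s_{i_1}\cdots s_{i_k}$ gives $w\circ\overline{D}=(-1)^k\overline{D}=\sgn(w)\overline{D}$.

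Fix $i\in I$ and strip off the $f_i$-direction from $\overline{U}_z^-$. Quotienting the decompositions of $U_z^-$ in Lemma \ref{lem:tdz} by the compatible decompositions of $J_z^-$ in Lemma \ref{lem:tdJ}, and using $\overline{U}_z^-=U_z^-/J_z^-$, I obtain
\[
\overline{U}_z^-\cong N_i^-\otimes\Big(\bigoplus_{n\geqq0}Kf_i^{(n)}\Big),
\qquad
\overline{U}_z^-\cong\Big(\bigoplus_{n\geqq0}Kf_i^{(n)}\Big)\otimes\widetilde{N}_i^-,
\]
where $N_i^-=(U_z^-\cap T_i(U_z^-))/(J_z^-\cap T_i(U_z^-))$ and $\widetilde{N}_i^-=(U_z^-\cap T_i^{-1}(U_z^-))/(J_z^-\cap T_i^{-1}(U_z^-))$. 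Let $A_i=\sum_\gamma\dim(N_i^-)_{-\gamma}e(-\gamma)$ and $\widetilde{A}_i=\sum_\gamma\dim(\widetilde{N}_i^-)_{-\gamma}e(-\gamma)$ in $\CE$, and note that $\bigoplus_{n}Kf_i^{(n)}$ contributes $\sum_{n\geqq0}e(-n\alpha_i)=(1-e(-\alpha_i))^{-1}$. The two displays give
\[
\overline{D}^{-1}=A_i\,(1-e(-\alpha_i))^{-1}=(1-e(-\alpha_i))^{-1}\widetilde{A}_i
\]
in the commutative ring $\CE$, and cancelling the invertible factor $(1-e(-\alpha_i))^{-1}$ yields $A_i=\widetilde{A}_i$.

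Next I would show $s_iA_i=A_i$ for the ordinary action. The automorphism $T_i^{-1}$ restricts to an isomorphism $U_z^-\cap T_i(U_z^-)\xrightarrow{\sim}U_z^-\cap T_i^{-1}(U_z^-)$, and by the last assertion of Lemma \ref{lem:tdJ} it carries $J_z^-\cap T_i(U_z^-)$ onto $J_z^-\cap T_i^{-1}(U_z^-)$; hence it descends to an isomorphism $N_i^-\xrightarrow{\sim}\widetilde{N}_i^-$. Because $T_i(U_{z,\gamma})=U_{z,s_i\gamma}$, this isomorphism sends the weight space of weight $-\gamma$ to that of weight $-s_i\gamma$, so $\dim(\widetilde{N}_i^-)_{-s_i\gamma}=\dim(N_i^-)_{-\gamma}$, i.e.\ $\widetilde{A}_i=s_iA_i$. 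Combined with $A_i=\widetilde{A}_i$ this gives $s_iA_i=A_i$.

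Finally, inverting the identity above gives $\overline{D}=(1-e(-\alpha_i))A_i^{-1}$. Using the reformulation $s_i\circ\overline{D}=-\overline{D}\iff s_i(e(\rho)\overline{D})=-e(\rho)\overline{D}$ (which follows from $e(\rho)(s_i\circ x)=s_i(e(\rho)x)$), I compute, with $s_i\rho=\rho-\alpha_i$ and $s_i\alpha_i=-\alpha_i$,
\[
s_i\big(e(\rho)\overline{D}\big)=s_i\big((e(\rho)-e(\rho-\alpha_i))A_i^{-1}\big)=(e(\rho-\alpha_i)-e(\rho))A_i^{-1}=-e(\rho)\overline{D},
\]
where I used that $s_i$ is multiplicative and $s_iA_i=A_i$ forces $s_i(A_i^{-1})=A_i^{-1}$. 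Hence $s_i\circ\overline{D}=-\overline{D}$. The genuine content sits in the two structural lemmas, which are already available; within this proposition the one point demanding care is well-definedness inside $\CE$ of the formal sums and of their $s_i$-images (the shifted object $e(\rho)\overline{D}$ lives in the coset $\rho+Q$, which $s_i$ preserves since $s_i\rho-\rho=-\alpha_i\in Q$), and this is exactly the situation sanctioned by the convention that we only consider $wx,\,w\circ x$ when they lie in $\CE$.
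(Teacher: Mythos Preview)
Your proof is correct and follows essentially the same route as the paper's: reduce to $w=s_i$, peel off the factor $(1-e(-\alpha_i))$ using the decompositions of $U_z^-$ and $J_z^-$ from Lemmas~\ref{lem:tdz} and~\ref{lem:tdJ}, and use the last assertion of Lemma~\ref{lem:tdJ} (that $T_i^{-1}$ carries $J_z^-\cap T_i(U_z^-)$ onto $J_z^-\cap T_i^{-1}(U_z^-)$) to obtain the $s_i$-invariance of the remaining factor. Your $A_i^{-1}$ is exactly the paper's $\overline{D}_i$.

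The one small difference is in how the $s_i$-invariance of $\overline{D}_i$ is established. The paper writes $\overline{D}_i^{-1}=D_i^{-1}-(\text{$J$-correction})$, shows the $J$-correction is $s_i$-invariant via Lemma~\ref{lem:tdJ}, and then invokes the Lie-theoretic fact $s_iD_i=D_i$. You instead obtain $A_i=\widetilde{A}_i$ by cancelling $(1-e(-\alpha_i))^{-1}$ from the two decompositions of $\overline{U}_z^-$, and $s_iA_i=\widetilde{A}_i$ by passing $T_i^{-1}$ to the quotients $N_i^-\cong\widetilde{N}_i^-$; this is slightly more self-contained since it does not appeal to $s_iD_i=D_i$ separately. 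Your final computation via $e(\rho)$ is equivalent to the paper's identity $s_i\circ\overline{D}=-(1-e(-\alpha_i))\,s_i\overline{D}_i$ (both amount to $s_i\circ x=e(-\alpha_i)\,s_ix$); just note that $\rho$ is not assumed to lie in $P$, so strictly speaking $e(\rho)$ is not in $\CE$, and it is cleaner to use $s_i\circ x=e(-\alpha_i)\,s_ix$ directly as the paper does.
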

\begin{proof}
We may assume that $w=s_i$ for $i\in I$.
Define $D_i, \overline{D}_i\in\CE$ by
\begin{align*}
D&=(1-e(-\alpha_i))D_i,
\qquad
\overline{D}=(1-e(-\alpha_i))\overline{D}_i.
\end{align*}
Then we have $D_i=\prod_{\alpha\in\Delta^+\setminus\{\alpha_i\}}(1-e(-\alpha))^{m_\alpha}$. 
Moreover, by Lemma \ref{lem:tdz}, Lemma \ref{lem:tdJ} and \eqref{eq:decc} we have
\begin{align*}
D_i^{-1}
=&
\sum_{\gamma\in Q^+}\dim (U^-_{z,-\gamma}\cap T_i(U_z^-))e(-\gamma)
\\
=&
\sum_{\gamma\in Q^+}\dim (U^-_{z,-\gamma}\cap T_i^{-1}(U_z^-))e(-\gamma),
\\
\overline{D}_i^{-1}
=&
D_i^{-1}-\sum_{\gamma\in Q^+}\dim (J^-_{z,-\gamma}\cap T_i(U_z^-))e(-\gamma)
\\
=&
D_i^{-1}-\sum_{\gamma\in Q^+}\dim (J^-_{z,-\gamma}\cap T_i^{-1}(U_z^-))e(-\gamma).
\end{align*}
By $s_i\circ\overline{D}=-(1-e(-\alpha_i))s_i\overline{D}_i$ we have only to show $s_i\overline{D}_i=\overline{D}_i$.
By Lemma \ref{lem:tdJ} we have
\begin{align*}
&
s_i(\sum_{\gamma\in Q^+}\dim (J^-_{z,-\gamma}\cap T_i(U_z^-))e(-\gamma))
\\
=&\sum_{\gamma\in Q^+}\dim (J^-_{z}\cap T_i(U_{z,-\gamma}^-))e(-\gamma)
\\
=&\sum_{\gamma\in Q^+}\dim (J^-_{z,-\gamma}\cap T_i^{-1}(U_z^-))e(-\gamma),
\end{align*}
and hence the assertion follows from
$s_iD_i=D_i$.
\end{proof}

\begin{proposition}
\label{prop:ch}
Let $\lambda\in P^+$.
Assume that $V$ is an integrable highest weight $\oU_z$-module with highest weight $\lambda$.
Then we have
\[
\ch(V)=\sum_{w\in W}\sgn(w)\ch(M_z(w\circ\lambda)).
\]
\end{proposition}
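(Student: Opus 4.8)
The plan is to run Kac's character-formula argument for the modified algebra $\oU_z$, using the quantum Casimir operator $\Omega$ constructed in the previous section. First I would set up the standard reduction: since $V$ is an integrable highest weight module with highest weight $\lambda\in P^+$, it lies in $\CO^{\int}_{\lambda+Q}(\oU_z)$, and its character has the form $\ch(V)=\sum_{\mu}a_\mu\,\ch(M_z(\mu))$ for integers $a_\mu$, with $a_\lambda=1$ and all $\mu\leqq\lambda$ (using $\ch(M_z(\mu))=e(\mu)\oD^{-1}$ and the fact that the $e(\mu)\oD^{-1}$ are linearly independent in $\CE$ as $\mu$ ranges over $\lambda+Q$). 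This is the usual triangularity statement: every object of $\CO_C$ has a finite composition-type expansion in Verma characters because weights are bounded above inside finitely many cones $\lambda_k-Q^+$.

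Next I would use the quantum Casimir operator to constrain which $\mu$ can occur. By the Proposition immediately preceding the statement, $\Omega$ acts on $M_z(\mu)$ as $z^{f_C(\mu)}\id$; since $V$ is a highest weight module generated in weight $\lambda$, $\Omega$ must act on all of $V$ by the single scalar $z^{f_C(\lambda)}$. Hence only those $\mu$ with $z^{f_C(\mu)}=z^{f_C(\lambda)}$ contribute, and since $z$ is not a root of unity this forces $f_C(\mu)=f_C(\lambda)$. Recalling $f_C(\lambda)=(\lambda+\rho,\lambda+\rho)+a$ from the Remark, the condition $f_C(\mu)=f_C(\lambda)$ with $\mu\leqq\lambda$ is exactly the classical constraint $(\mu+\rho,\mu+\rho)=(\lambda+\rho,\lambda+\rho)$ that Kac exploits. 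Combined with the integrability of $V$, this lets me run Kac's inductive argument verbatim: among the contributing $\mu$, the highest one in the dominance order not yet accounted for must be $W\circ$-conjugate to $\lambda$, and the skew-invariance $w\circ\oD=\sgn(w)\oD$ (just proved in the preceding Proposition) pins down the coefficients $a_{w\circ\lambda}=\sgn(w)$.

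The mechanism for the inductive step is the skew-invariance applied to the full character. Writing $\ch(V)=\sum_\mu a_\mu e(\mu)\oD^{-1}$, integrability implies $\ch(V)$ is $W$-invariant (the $T_i$ operators give isomorphisms $V_\mu\cong V_{s_i\mu}$), so $\oD\cdot\ch(V)=\sum_\mu a_\mu e(\mu)$ is skew: $w\circ(\oD\,\ch(V))=\sgn(w)\,\oD\,\ch(V)$, i.e. $a_{w\circ\mu}=\sgn(w)a_\mu$ for all $w,\mu$. Here I must be careful that the twisted action $w\circ$ is the relevant one, matching the twisted skew-invariance of $\oD$; this is why $\rho$ and the $\circ$-action were introduced. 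Since each $W\circ$-orbit meets the dominant chamber, and the only dominant $\mu$ with $f_C(\mu)=f_C(\lambda)$ and $\mu\leqq\lambda$ is $\lambda$ itself (strictly dominant for the $\circ$-action, using $\langle\lambda+\rho,h_i\rangle\geqq1$), the orbit-sum collapses to $\sum_{w\in W}\sgn(w)e(w\circ\lambda)$, giving $\ch(V)=\sum_{w\in W}\sgn(w)e(w\circ\lambda)\oD^{-1}=\sum_{w\in W}\sgn(w)\ch(M_z(w\circ\lambda))$.

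The main obstacle I anticipate is verifying that the quantum Casimir operator is genuinely well-defined and central enough on every object of $\CO_C(\oU_z)$ to act by a scalar on highest weight modules — in particular that the infinite sum $\sum_\gamma\Omega_\gamma$ converges on each weight space and that $\Omega$ commutes with the $\oU_z$-action. This is exactly where the non-degeneracy of $\overline{\tau}_{z,\gamma}$ (hence the existence of the canonical elements $\overline{C}_\gamma$) is essential, and it is the reason the argument is run on $\oU_z$ rather than $U_z$; the stated Proposition on $\Omega$ already packages this, so the remaining work is the combinatorial bookkeeping of Kac's induction, which is routine once the skew-invariance and the Casimir eigenvalue separation are in hand. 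A secondary point requiring care is the finiteness of the expansion $\ch(V)=\sum a_\mu\ch(M_z(\mu))$, which follows from the $\CO(\oU_z)$ support condition but should be checked so that all manipulations in $\CE$ are legitimate.
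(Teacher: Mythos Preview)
Your proposal is correct and follows essentially the same approach as the paper: expand $\ch(V)$ in Verma characters, use the quantum Casimir on $\oU_z$ to restrict to $\mu$ with $f_C(\mu)=f_C(\lambda)$, use $T_i$-invariance of $\ch(V)$ together with the skew-invariance $w\circ\overline{D}=\sgn(w)\overline{D}$ to get $a_{w\circ\mu}=\sgn(w)a_\mu$, and then reduce to a dominant representative forced to equal $\lambda$. The paper phrases the last step as taking $\mu'\in W\circ\mu$ of minimal height in $\lambda-Q^+$ (which exists because the skew-invariance keeps the whole orbit inside $\lambda-Q^+$) and invoking Kac's Lemma~10.3, rather than asserting that every $W\circ$-orbit meets the dominant chamber; in infinite type that assertion needs exactly this boundedness argument, so your ``each $W\circ$-orbit meets the dominant chamber'' should be justified this way, but since you explicitly defer to Kac's argument this is already implicit in your sketch.
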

\begin{proof}
The proof below is the same as the one for Lie algebras in Kac \cite[Theorem 10.4]{K}.

Set $C=\lambda+Q\in P/Q$.
Similarly to \cite[Proposition 9.8]{K} we have
\begin{equation}
\label{eq:ch1}
\ch(V)=\sum_{\mu\in\lambda-Q^+, f_C(\mu)=f_C(\lambda)}c_\mu\ch(M_z(\mu))
\qquad
(c_\mu\in\BZ, c_\lambda=1).
\end{equation}
Multiplying \eqref{eq:ch1}  by $\overline{D}$ we obtain
\[
\overline{D}\ch(V)=\sum_{\mu\in\lambda-Q^+, f_C(\mu)=f_C(\lambda)}
c_\mu e(\mu).
\]
Using the action of $T_i\;(i\in I)$ on $V$ we see that $w\ch(V)=\ch(V)$ for $w\in W$, and hence
$w\circ(\overline{D}\ch(V))=\sgn(w)\overline{D}\ch(V)$ for any $w\in W$.
It follows that 
\begin{equation}
\label{eq:ch3}
c_\mu=\sgn(w)c_{w\circ\mu}
\qquad(\mu\in\lambda-Q^+, w\in W).
\end{equation}
Assume that $\mu\in\lambda-Q^+$ satisfies 
$c_\mu\ne0$.
By \eqref{eq:ch3} $W\circ\mu\subset\lambda-Q^+$, and hence we can take 
$\mu'\in W\circ\mu$ such that $\Ht(\lambda-\mu')$ is minimal, where $\Ht(\sum_im_i\alpha_i)=\sum_im_i$.
Then we have $\langle\mu',h_i\rangle\geqq0$ for any $i\in I$ by $s_i\circ\mu'=
\mu'-(\langle\mu',h_i\rangle+1)\alpha_i$ and \eqref{eq:ch3}.
Namely, we have $\mu'\in P^+$.
Then by \cite[Lemma 10.3]{K}  we obtain $\mu'=\lambda$.
\end{proof}
\begin{remark}
I. Heckenberger pointed out to me that Proposition \ref{prop:ch} also follows from the existence of the BGG resolution of integrable
highest weight modules of quantized enveloping algebras given in \cite{HK}
\end{remark}

Recall that any integrable highest weight module $V$ with highest weight $\lambda$ is a quotient of $V_z(\lambda)$.
Proposition \ref{prop:ch} tells us that its character $\ch(V)$ only depends on $\lambda$.
It follows that any  integrable highest weight module with highest weight $\lambda$ is isomorphic to $V_z(\lambda)$.

Consider the case $\lambda=0$.
Since $V_z(0)$ is the trivial one-dimensional module, we obtain the identity
\[
1=
\left(
\sum_{w\in W}\sgn(w)e(w\circ0)
\right)
\left(
\sum_{\gamma\in Q^+}\dim\oU^-_{z,-\gamma}e(-\gamma)
\right)
\]
in $\CE$ by Proposition \ref{prop:ch}.
On the other hand by the corresponding result for the Kac-Moody Lie algebra we have
\[
1=
\left(
\sum_{w\in W}\sgn(w)e(w\circ0)
\right)
\left(
\sum_{\gamma\in Q^+}\dim U^-_{z,-\gamma}e(-\gamma)
\right).
\]
It follows that $U^-_{z,-\gamma}\cong\oU^-_{z,-\gamma}$ for any $\gamma\
\in Q^+$.
By $\dim U^-_{z,-\gamma}=\dim U^+_{z,\gamma}$ and the non-degeneracy of $\overline{\tau}_{z,\gamma}$ we also have
$U^+_{z,\gamma}\cong\oU^+_{z,\gamma}$ for any $\gamma\in Q^+$.
We have obtained the following results.

\begin{theorem}
\label{thm}
The Drinfeld pairing 
\[
\tau_{z,\gamma}:U^+_{z,\gamma}\times U^-_{z,-\gamma}\to K
\]
is non-degenerate for any $\gamma\in Q^+$.
\end{theorem}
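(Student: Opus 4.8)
The plan is to show that both radicals $J^+_z$ and $J^-_z$ of the Drinfeld pairing vanish, for then $\tau_{z,\gamma}$ coincides with the induced pairing $\overline{\tau}_{z,\gamma}$ on the modified algebra $\oU_z=U_z/J_z$, which is non-degenerate by construction. Rather than attack the radical head-on, I would extract the dimensions $\dim\oU^-_{z,-\gamma}$ from a Weyl--Kac type character formula for $\oU_z$ and compare them with the known dimensions of $U^-_{z,-\gamma}$ recorded in \eqref{eq:dim3}.

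First I would realize the quantum Casimir operator $\Omega$ on $\CO_C(\oU_z)$ via the canonical elements $\overline{C}_\gamma$ of the non-degenerate forms $\overline{\tau}_{z,\gamma}$; the inputs are the standard identities \eqref{eq:D10}--\eqref{eq:D14} and the tensor decompositions of Lemma \ref{lem:tdz} and Lemma \ref{lem:tdJ}, which descend to the quotient. On $M_z(\mu)$ this operator acts by the scalar $z^{f_C(\mu)}$, so $f_C$ plays the role of Kac's invariant $(\lambda+\rho,\lambda+\rho)$. Running Kac's argument verbatim — this is exactly Proposition \ref{prop:ch}, and it consumes the skew-invariance $w\circ\overline{D}=\sgn(w)\overline{D}$ together with the $W$-invariance of $\ch(V)$ supplied by the operators $T_i$ — yields $\ch(V)=\sum_{w\in W}\sgn(w)\,\ch(M_z(w\circ\lambda))$ for every integrable highest weight $\oU_z$-module $V$ of highest weight $\lambda\in P^+$.

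Next I would specialize to $\lambda=0$. Since $V_z(0)$ is the trivial one-dimensional module, the formula collapses to the modified denominator identity $1=\bigl(\sum_{w\in W}\sgn(w)e(w\circ0)\bigr)\overline{D}^{-1}$ in $\CE$, with $\overline{D}^{-1}=\sum_{\gamma\in Q^+}\dim\oU^-_{z,-\gamma}\,e(-\gamma)$. On the other hand the classical Weyl--Kac denominator identity for $\Gg$, combined with \eqref{eq:dim3}, gives $1=\bigl(\sum_{w\in W}\sgn(w)e(w\circ0)\bigr)D^{-1}$ with $D^{-1}=\sum_{\gamma}\dim U^-_{z,-\gamma}\,e(-\gamma)$. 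The element $A=\sum_{w}\sgn(w)e(w\circ0)$ is a unit in $\CE$, each identity already exhibiting a right inverse, so inverses are unique and $\overline{D}^{-1}=D^{-1}$; comparing coefficients gives $\dim\oU^-_{z,-\gamma}=\dim U^-_{z,-\gamma}$ for all $\gamma\in Q^+$. By \eqref{eq:decc} this forces $J^-_{z,-\gamma}=\{0\}$, hence $J^-_z=\{0\}$. Finally, the non-degeneracy of $\overline{\tau}_{z,\gamma}$ yields $\dim\oU^+_{z,\gamma}=\dim\oU^-_{z,-\gamma}=\dim U^-_{z,-\gamma}=\dim U^+_{z,\gamma}$, so $J^+_{z,\gamma}=\{0\}$ as well; therefore $\tau_{z,\gamma}=\overline{\tau}_{z,\gamma}$ is non-degenerate.

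I expect the genuine obstacle to lie not in the theorem itself — which, given the preceding machinery, reduces to the clean comparison above — but in the two ingredients feeding it: first, that $\overline{\tau}_z$ is non-degenerate and the quotient is well-behaved enough for the Casimir construction to make sense on $\oU_z$ (this rests on Proposition \ref{prop:J} and Lemma \ref{lem:tdJ}, i.e. on $J_z$ being a Hopf ideal compatible with the triangular decomposition); and second, the skew-invariance $w\circ\overline{D}=\sgn(w)\overline{D}$, which is precisely where properties \eqref{eq:D12}--\eqref{eq:D14} of the Drinfeld pairing do the real work. Once those are secured, the cancellation in $\CE$ and the dimension bookkeeping are routine, and the non-degeneracy of $\tau_{z,\gamma}$ emerges \emph{a posteriori} — somewhat strikingly, by first passing to a possibly smaller algebra and then discovering that nothing was lost.
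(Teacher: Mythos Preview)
Your proposal is correct and follows the paper's argument essentially step for step: specialize Proposition~\ref{prop:ch} at $\lambda=0$ to obtain the modified denominator identity, compare with the classical Weyl--Kac denominator identity via \eqref{eq:dim3}, and read off $\dim\oU^{\pm}_{z,\pm\gamma}=\dim U^{\pm}_{z,\pm\gamma}$, hence $J_z^{\pm}=\{0\}$. You have also correctly located where the substantive work lies (Proposition~\ref{prop:J}, Lemma~\ref{lem:tdJ}, and the skew-invariance of $\overline{D}$), which is exactly how the paper structures the buildup.
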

\begin{theorem}
\label{thm2}
Let $\lambda\in P^+$.
Assume that $V$ is an integrable highest weight $U_z$-module with highest weight $\lambda$.
Then we have
\[
\ch(V)=D^{-1}\sum_{w\in W}\sgn(w)e(w\circ\lambda).
\]
\end{theorem}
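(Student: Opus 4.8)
The plan is to obtain Theorem~\ref{thm2} as a formal consequence of Theorem~\ref{thm} together with Proposition~\ref{prop:ch}, the point being that non-degeneracy of the Drinfeld pairing forces the modifying ideal $J_z$ to be trivial, so that $\oU_z$ collapses back onto $U_z$.

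First I would translate Theorem~\ref{thm} into a statement about $J_z$. For $x\in U^+_{z,\gamma}$, relation \eqref{eq:D7} shows that $\tau_z(x,U^-_z)=\{0\}$ is equivalent to $\tau_{z,\gamma}(x,U^-_{z,-\gamma})=\{0\}$; hence $J^+_{z,\gamma}$ is exactly the left radical of $\tau_{z,\gamma}$, and symmetrically $J^-_{z,-\gamma}$ is its right radical. Non-degeneracy of every $\tau_{z,\gamma}$ therefore gives $J^\pm_{z,\pm\gamma}=\{0\}$ for all $\gamma\in Q^+$, so $J_z^\pm=\{0\}$ by \eqref{eq:J2} and consequently $J_z=U_zJ_z^+U_z+U_zJ_z^-U_z=\{0\}$. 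Thus the projection $U_z\to\oU_z$ is an isomorphism and $\oU_z=U_z$ as Hopf algebras.

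Two things follow immediately. Any integrable highest weight $U_z$-module $V$ with highest weight $\lambda$ is then an integrable highest weight $\oU_z$-module with the same highest weight, so Proposition~\ref{prop:ch} applies to $V$ and gives $\ch(V)=\sum_{w\in W}\sgn(w)\ch(M_z(w\circ\lambda))$. Moreover the modified denominator agrees with the ordinary one: since $\oU^-_{z,-\gamma}=U^-_{z,-\gamma}$, comparing $\overline{D}^{-1}=\sum_{\gamma\in Q^+}\dim\oU^-_{z,-\gamma}e(-\gamma)$ with \eqref{eq:dim3} yields $\overline{D}=D$. Substituting the Verma character $\ch(M_z(\mu))=e(\mu)\overline{D}^{-1}=e(\mu)D^{-1}$ into the previous display produces $\ch(V)=D^{-1}\sum_{w\in W}\sgn(w)e(w\circ\lambda)$, which is the assertion.

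The one genuine input is Theorem~\ref{thm}; granting it, the present statement is essentially a corollary. The only point requiring care is that the identification $\oU_z=U_z$ is compatible with the module structures and with the formal-character map $\ch$, so that Proposition~\ref{prop:ch}, established for $\oU_z$-modules, transfers to the original $U_z$-modules without any loss. This compatibility is precisely what the vanishing $J_z=\{0\}$ guarantees.
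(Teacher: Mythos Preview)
Your proposal is correct and matches the paper's own reasoning: once Theorem~\ref{thm} is available, the radicals $J_z^\pm$ vanish, so $\oU_z=U_z$ and $\overline D=D$, and Theorem~\ref{thm2} is then precisely Proposition~\ref{prop:ch} with $\ch(M_z(\mu))=e(\mu)D^{-1}$ substituted in. The paper does not give a separate proof of Theorem~\ref{thm2}; it states Theorems~\ref{thm} and~\ref{thm2} together as consequences of the discussion following Proposition~\ref{prop:ch}, which is exactly the deduction you wrote out.
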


By Theorem \ref{thm} we can define the quantum Casimir operator $\Omega$ for $U_z$.
As in \cite[Section 6.2]{Lbook} we have the following.
\begin{theorem}
\label{th:3}
Any object of $\CO^{\int}(U_z)$ is a direct sum of $V_z(\lambda)$'s for $\lambda\in P^+$.
\end{theorem}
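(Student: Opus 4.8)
The plan is to run Kac's complete reducibility argument by means of the quantum Casimir operator $\Omega$, exactly in the spirit of \cite[Section 6.2]{Lbook}. By Theorem \ref{thm} the radical $J_z$ vanishes, so $\oU_z=U_z$ and every construction of the previous section---in particular $\Omega$ together with its basic properties---is available for $U_z$ itself. I shall use two standard properties of $\Omega$: first, for each $C\in P/Q$ and each $V\in\CO_C(U_z)$ the operator $\Omega$ is an endomorphism of the $U_z$-module $V$ preserving every (finite-dimensional) weight space $V_\lambda$ (cf. \cite[Section 3.2]{T0}, \cite[Section 6.1]{Lbook}); second, by the Proposition above $\Omega$ acts on any highest weight module of highest weight $\lambda$ as the scalar $z^{f_C(\lambda)}$. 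I also record that the simple objects of $\CO^{\int}(U_z)$ are exactly the $V_z(\lambda)$ with $\lambda\in P^+$: a simple object is generated by a maximal weight vector, hence is an integrable highest weight module, hence isomorphic to some $V_z(\lambda)$ by the discussion preceding Theorem \ref{thm}; and $V_z(\lambda)$ is itself simple, since any proper quotient would again be an integrable highest weight module of highest weight $\lambda$, so isomorphic to $V_z(\lambda)$ with the same character, forcing the kernel to be zero.

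The reduction I would carry out is to show that an arbitrary $V\in\CO^{\int}(U_z)$ equals the sum $S$ of all its simple submodules; once this is known, $V$ is semisimple and hence a direct sum of copies of the $V_z(\lambda)$, as desired. Since $\CO^{\int}(U_z)=\bigoplus_{C}\CO_C^{\int}(U_z)$, I may assume $V\in\CO_C^{\int}(U_z)$. Because $\Omega$ preserves each finite-dimensional $V_\lambda$ it is locally finite, and commuting with the $U_z$-action it yields a decomposition $V=\bigoplus_{t}V[t]$ into generalized eigenspaces $V[t]=\{v\mid(\Omega-t)^Nv=0,\ N\gg0\}$, each a $U_z$-submodule lying in $\CO_C^{\int}(U_z)$. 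The simple submodules in $S$ are the $V_z(\nu)$ ($\nu\in P^+$) on which $\Omega$ acts by $z^{f_C(\nu)}$, so $S[t]$ is a sum of simple submodules isomorphic to $V_z(\nu)$ with $z^{f_C(\nu)}=t$; since $z$ is not a root of $1$ this means $f_C(\nu)$ takes a single fixed value.

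Suppose, for contradiction, that $S\neq V$. Then $V/S$ is a nonzero object of $\CO_C^{\int}(U_z)$; its weights are bounded above, so it has a maximal weight $\mu$, and any nonzero $\bar v\in(V/S)_\mu$ satisfies $e_i\bar v=0$ for all $i$, with $\mu\in P^+$ by integrability. Put $t=z^{f_C(\mu)}$; then $\Omega\bar v=t\bar v$, so $\bar v$ lies in $(V/S)[t]$, and since $V[t]\to(V/S)[t]$ is a weight-preserving surjection I can lift $\bar v$ to some $v_0\in V[t]_\mu$. The heart of the argument---and the step I expect to be the main obstacle---is to show that $v_0$ is already a highest weight vector, i.e. $e_iv_0=0$ for every $i$. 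For each $i$ the element $e_iv_0$ maps to $e_i\bar v=0$ in $V/S$, hence $e_iv_0\in S[t]_{\mu+\alpha_i}$, so it suffices to prove $S[t]_{\mu+\alpha_i}=\{0\}$. If this were nonzero, some simple summand $V_z(\nu)$ of $S[t]$ would have $\mu+\alpha_i$ as a weight, forcing $\nu-(\mu+\alpha_i)\in Q^+$, whence $\nu-\mu\in Q^+$ and $\nu\neq\mu$; but $\nu,\mu\in P^+$ with $f_C(\nu)=f_C(\mu)$, which by \cite[Lemma 10.3]{K} gives $\nu=\mu$, a contradiction. Hence $e_iv_0=0$ for all $i$, so $U_zv_0$ is an integrable highest weight module of highest weight $\mu$, thus isomorphic to the simple module $V_z(\mu)$ and contained in $S$; this forces $\bar v=0$, a contradiction. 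Therefore $S=V$, $V$ is a direct sum of copies of the $V_z(\lambda)$, and the theorem follows.
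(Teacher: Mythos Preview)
Your proof is correct and follows essentially the same approach as the paper, which simply refers to \cite[Section 6.2]{Lbook} for the quantum Casimir argument; you have merely written out the details of that argument (showing the socle equals $V$ via the generalized $\Omega$-eigenspace decomposition and the key inequality from \cite[Lemma 10.3]{K}).
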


By Theorem \ref{thm} we have the following.
\begin{theorem}
\label{th:4}
Let $\gamma\in Q^+$.
Take bases $\{x_r\}$ and $\{y_s\}$ of $U^+_{\BA,\gamma}$ and $U^-_{\BA,-\gamma}$ respectively, and set $f_\gamma=\det(\tau_{\tA,\gamma}(x_r,y_s))_{r,s}$.
Then we have $f_\gamma\in\tA^\times$.
Namely, we have
\[
f_\gamma=\pm q^a f_1^{\pm1}\cdots f_N^{\pm1},
\]
where $a\in\BZ$, and $f_1,\dots, f_N\in\BZ[q]$ are cyclotomic polynomials.
\end{theorem}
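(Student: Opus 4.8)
The plan is to deduce Theorem~\ref{th:4} directly from Theorem~\ref{thm} together with the integrality structure of the Drinfeld pairing over $\tA$. First I would observe that the determinant $f_\gamma=\det(\tau_{\tA,\gamma}(x_r,y_s))_{r,s}$ is well-defined as an element of $\tA$: since $U^+_{\BA,\gamma}$ and $U^-_{\BA,-\gamma}$ are free $\BA$-modules of equal finite rank $\dim U^+_{\gamma}$ (by the rank statement before \eqref{eq:dim2}), and since $\tau_{\tA,\gamma}$ takes values in $\tA$, the matrix $(\tau_{\tA,\gamma}(x_r,y_s))$ has entries in $\tA$, so its determinant lies in $\tA$. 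Changing the $\BA$-bases $\{x_r\}$, $\{y_s\}$ multiplies $f_\gamma$ by a unit of $\BA$ (a determinant of a base-change matrix in $GL$ over $\BA=\BZ[q,q^{-1}]$, hence a factor of the form $\pm q^m$), so the property $f_\gamma\in\tA^\times$ is independent of the choice of bases.

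The key point is that $f_\gamma$ is a nonzero element of $\tA$ whose image under \emph{every} admissible specialization $q\mapsto z$ is nonzero. Concretely, fix any field $K$ and any $z\in K^\times$ that is not a root of $1$. Under the map $\tA\to K$ sending $q\mapsto z$, the bases $\{x_r\}$, $\{y_s\}$ specialize to bases of $U^+_{z,\gamma}$ and $U^-_{z,-\gamma}$, and the matrix $(\tau_{\tA,\gamma}(x_r,y_s))$ specializes to the Gram matrix of $\tau_{z,\gamma}$. By Theorem~\ref{thm}, $\tau_{z,\gamma}$ is non-degenerate for every such $(K,z)$, so the specialized determinant is a nonzero element of $K$; that is, the image of $f_\gamma$ in $K$ is nonzero. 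Thus $f_\gamma\in\tA$ has the property that it does not lie in the kernel of any ring homomorphism $\tA\to K$ arising from $q\mapsto z$ with $z$ not a root of unity.

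It remains to translate ``nonvanishing under all such specializations'' into ``$f_\gamma$ is a unit of $\tA$.'' First, taking $K=\BF=\BQ(q)$ and $z=q$ (which is not a root of unity), non-degeneracy shows $f_\gamma\ne0$ in $\BF$, so $f_\gamma$ is a nonzero element of $\tA\subset\BF$. Now I factor $f_\gamma$ in $\BZ[q,q^{-1}]$, after clearing the denominator: by the definition of $\tA=\BZ[q,q^{-1},(q^n-1)^{-1}\mid n>0]$, an element is a unit of $\tA$ precisely when, up to a factor $\pm q^a$, it is a product of factors $(q^n-1)^{\pm1}$, equivalently a product of cyclotomic polynomials to integer powers. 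So I write the primitive part of $f_\gamma$ (as a Laurent polynomial over $\BZ$) and must show every irreducible factor is cyclotomic and the leading coefficient is $\pm1$. The main obstacle, and the heart of the argument, is to rule out any non-cyclotomic irreducible factor. I would argue as follows: an irreducible factor $p(q)\in\BZ[q]$ that is not $\pm q^a$ times a cyclotomic polynomial has a root $\zeta$ that is not a root of unity; choosing an appropriate field $K$ (for instance the residue field $\BZ[q]/(p(q))$ or a suitable algebraic or transcendental extension) and letting $z$ be the image of $q$, one gets an admissible specialization, with $z$ not a root of unity, at which $f_\gamma$ vanishes, contradicting the non-degeneracy from Theorem~\ref{thm}. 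Likewise, a prime-power factor in the constant (the content) or a leading coefficient $\ne\pm1$ would produce, via reduction modulo that prime over a field of positive characteristic with $z$ transcendental or otherwise not a root of unity, a specialization killing $f_\gamma$, again a contradiction. Hence the only admissible irreducible factors are cyclotomic (whose roots are roots of unity, so the excluded specializations $z\mapsto\zeta$ are legitimately forbidden), the content is $\pm1$, and therefore $f_\gamma=\pm q^a f_1^{\pm1}\cdots f_N^{\pm1}$ with each $f_j$ cyclotomic, i.e.\ $f_\gamma\in\tA^\times$.
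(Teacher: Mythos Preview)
Your proposal is correct and follows essentially the same approach as the paper's proof: factor $f_\gamma$ into an integer content, a primitive non-cyclotomic part, and a unit of $\tA$, then use Theorem~\ref{thm} at suitably chosen specializations (algebraic $z$ in characteristic zero to kill non-cyclotomic factors, transcendental $z$ in positive characteristic to kill integer content) to force the first two factors to be trivial. The paper compresses this into two sentences (``We can write $f_\gamma=mgh$\dots Hence we see easily that $m=1$ and $g=1$''), while you have spelled out the specialization choices explicitly; the only cosmetic slip is that $\BZ[q]/(p(q))$ need not be a field, but your parenthetical ``or a suitable algebraic\dots extension'' (e.g.\ $\BQ[q]/(p(q))$) covers this.
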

\begin{proof}
We can write $f_\gamma=mgh$, where $m\in\BZ_{>0}$, $g\in \BZ[q]$ is a primitive polynomial with $g(0)>0$ whose irreducible factor is not cyclotomic, and $h\in\tA^\times$.
Note that for any field $K$ and $z\in K^\times$ which is not a root of 1, the specialization of $f_\gamma$ with respect to the ring homomorphism $\tA\to K\;(q\mapsto z)$ is non-zero by Theorem \ref{thm}.
Hence we see easily that $m=1$ and $g=1$.
\end{proof}
In the finite case Theorem \ref{th:4} is well-known (see \cite{KR}, \cite{LS}, \cite{Lbook}).
In the affine case this is a consequence of Damiani \cite{Da1}, \cite{Da2}, where $\det(\tau_{\tA,\gamma}(x_r,y_s))_{r,s}$ is determined explicitly by a case-by-case calculation.

\bibliographystyle{unsrt}

\begin{thebibliography}{99}

\bibitem{APW}
H. Andersen, P. Polo, K. Wen
\textit{Representations of quantum algebras.} 
Invent. Math. \textbf{104} (1991), 1--59.

\bibitem{CG}
V. Chari and N. Jing, 
\textit{
Realization of level one representations of $U_q(\widehat{\mathfrak{g}})
$ at a root of unity.}
Duke Math.J. \textbf{108} (2001), 183--197. 


\bibitem{Da1}
I. Damiani
\textit{The highest coefficient of $\det H_\eta$ and the
center of the specialization at odd roots of unity for untwisted affine quantum algebras.} 
J. Algebra \textbf{186} (1996), no. 3, 736--780.



\bibitem{Da2}
I. Damiani
\textit{The R-matrix for (twisted) affine quantum algebras.}
Representations and quantizations (Shanghai, 1998), 89--144, China High. Educ. Press, Beijing, 2000.

\bibitem{HK}
I. Heckenberger, S. Kolb, 
\textit{On the Bernstein-Gelfand-Gelfand resolution for Kac-Moody algebras and quantized enveloping algebras.} 
Transform. Groups \textbf{12} (2007), no. 4, 647--655. 


\bibitem{K}
V. Kac,
\textit{Infinite dimensional Lie algebras. Third edition.}
Cambridge University Press, Cambridge, 1990.

\bibitem{Kas}
M. Kashiwara
\textit{On crystal bases.}
Representations of groups (Banff, AB, 1994), 155--197, CMS
Conf. Proc., \textbf{16}, Amer. Math. Soc., Providence, RI, 1995.

\bibitem{KR}
A. N. Kirillov, N. Reshetikhin,
\textit{
$q$-Weyl group and a multiplicative formula for universal $R$-matrices. }
Comm. Math. Phys. \textbf{134} (1990), no. 2, 421--431.

\bibitem{LS}
S. Z. Levendorskii, Ya. S. Soibelman,
\textit{
Some applications of the quantum Weyl groups.}
J. Geom. Phys. \textbf{7} (1990), no. 2, 241--254.

\bibitem{L0}
G. Lusztig, 
Quantum deformations of certain simple modules over enveloping algebras. Adv. in Math. \textbf{70} (1988), 237--249 .

\bibitem{Lbook}
G. Lusztig, 
\textit{Introduction to quantum groups}.
Progr. Math., \textbf{110}, Boston etc. Birkh\"auser, 1993.


\bibitem{T0}
T. Tanisaki, 
\textit{
Killing forms, Harish-Chandra isomorphisms, and universal
$R$-matrices for quantum algebras}.
\newblock 
Inter. J. Mod. Phys. \textbf{A7}, Suppl. 1B 
(1992), 941--961.

\bibitem{T}
T. Tanisaki, 
\textit{
Modules over quantized coordinate algebras and PBW-bases.}
to appear in J. Math. Soc. Japan, 
arXiv:1409.7973.

\bibitem{T1}
T. Tanisaki, 
\textit{
Invariance of the Drinfeld pairing of a quantum group.}
to appear in Tokyo J. Math.,
arXiv:1503.04573.

\bibitem{Tsu}
S. Tsuchioka, 
\textit{
Graded Cartan determinants of the symmetric groups.}
Trans. Amer. Math. Soc. \textbf{366} (2014), 2019--2040. 
\end{thebibliography}

\end{document}